\documentclass[12pt]{article}
\usepackage{amsmath,amssymb,amsfonts,amsthm}
\usepackage{xspace,xargs,enumitem,graphics,xcolor,graphicx}
\usepackage[sort&compress,square,numbers]{natbib}
\usepackage{hyperref}
\usepackage{etoolbox}
\usepackage{algorithm}
\usepackage{algpseudocode}
\makeatletter
\preto{\@verbatim}{\topsep=0pt \partopsep=0pt }
\makeatother

\usepackage{cleveref}

\def\Cal#1{\mathcal{#1}}

\def\Grp#1{\left(#1\right)}
\def\Cbr#1{\left\{#1\right\}}
\def\Sbr#1{\left[#1\right]}

\def\gv{\,|\,}
\def\cf#1{\mathbf{1}\!\Cbr{#1}}

\def\toi{\to\infty}
\def\Linf{\varliminf}
\def\Lsup{\varlimsup}
\def\toi{\to\infty}
\def\Iff{\Longleftrightarrow}   

\def\Coms{\mathbb{C}}
\def\Ints{\mathbb{Z}}
\def\Reals{\mathbb{R}}

\newcommandx*\inum[2][1=1]{#2_{#1}, #2_{\number\numexpr#1+1}, \ldots}
\newcommandx*\cset[3][1=1]{#2_{#1} \ldots #2_{#3}}
\newcommandx*\cum[3][1=1]{#2_{#1}+\cdots+#2_{#3}}
\newcommandx*\eno[3][1=1]{#2_{#1},\ldots,#2_{#3}}
\newcommandx*\seqop[4][1=1]{#2_{#1}#3\cdots#3 #2_{#4}}

\def\sumoi#1{\sum_{#1=1}^\infty}
\def\sumzi#1{\sum_{#1=0}^\infty}

\newtheorem{theorem}{Theorem}[section]
\crefname{theorem}{theorem}{theorems}
\newtheorem{prop}[theorem]{Proposition}
\crefname{prop}{proposition}{propositions}
\newtheorem{lemma}[theorem]{Lemma}
\crefname{lemma}{lemma}{lemmas}

\theoremstyle{definition}

\crefname{example}{example}{examples}
\theoremstyle{definition}
\newtheorem{definition}{Definition}
\theoremstyle{remark}
\newtheorem*{remark}{Remark}

\setlist{itemsep=0ex, topsep=1ex, parsep=0ex}
\newlist{subenum}{enumerate}{3}
\setlist[subenum,1]{label={\alph*}),leftmargin=5ex}

\def\Sp#1{\sp{(#1)}}

\def\nth#1{\frac{1}{#1}}
\def\lfrac#1#2{#1/#2}     

\def\mean{\operatorname{E}}
\def\prob{\operatorname{P}}
\def\rx{\epsilon}
\def\udisc{\mathbb{D}}

\makeatletter
\def\@cleandot{\@ifnextchar.{}{\@ifnextchar,{.}{\@ifnextchar;{.}{\@ifnextchar?{.}{\@ifnextchar:{.}{\@ifnextchar!{.}{\@ifnextchar'{.}{\@ifnextchar){.}{\@ifnextchar({.}{\@ifnextchar-{.}{\@ifnextchar\/{.}{\@ifnextchar\\{.}{.\ }}}}}}}}}}}}}
\def\pmf{p.m.f\@cleandot}
\def\rhs{r.h.s\@cleandot}
\makeatother

\title{}  

\author{
}

\date{\today}

\begin{document}
\begin{center}
  \large
  \textbf{On difference in word frequencies in a symmetric Bernoulli
    process}
  \\[1.5ex]
  \normalsize
  Zhiyi Chi \\
  Department of Statistics\\
  University of Connecticut,
  Storrs, CT 06269, USA, \\[.5ex]
  E-mail: zhiyi.chi@uconn.edu\\[1ex]
  \today
\end{center}

\begin{abstract}
  In a symmetric Bernoulli process, all binary strings, or ``words''
  of the same length have the same long term frequency.  However,
  between two such words, one may have a ``frequency advantage'' in 
  the sense that in any long enough segment of the Bernoulli process,
  the probability that the word occurs more times than the other word
  is greater than the probability the other way around.  To
  characterize the frequency advantage in the long run, the 
  asymptotics of the difference between the two probabilities as the
  length of the segment of the Bernoulli process tends to infinity is
  derived.
  
  \medbreak\noindent
  \textit{Keywords and phrases.}  Markov chain, word frequency,
  Darboux-type Tauberian theorem

  \medbreak\noindent
  2020 Mathematics Subject Classifications: 60J10, 60C05
\end{abstract}
\section{Introduction}
Let $(X_i,i\ge1)$ be a Bernoulli process with $\prob\{X_i = 0\} =
1/2$.  For a word $a$, i.e., a finite sequence of 0's and 1's, denote
by $C_n(a)$ the number of times it occurs in  $\eno X n$.  Using
combinatorial method, Levin \cite {levin:24:arxiv} showed that for
$n>2$, $\prob\{C_n(10)> C_n(11)\} > \prob\{C_n(10) < C_n(11)\}$, which
means the event that 10 appears more times than 11 in $\eno X n$ has
a better chance than the event the other way around.  We will
say that 10 has a ``frequency advantage'' over 11.  The result
is somewhat surprising as the two words have the same long-run
frequency in $\inum X$.  Naturally, a question is whether similar
comparisons can be made to other pairs of words.  In general, for two
different words $a$ and $b$, a direct comparison of
$\prob\{C_n(a)>C_n(b)\}$ and $\prob\{C_n(a) < C_n(b)\}$ seems
intractable.  However, if the asymptotics of
\[
  \varphi_n(a,b) :=
  \prob\{C_n(a)>C_n(b)\} - \prob\{C_n(a)<C_n(b)\} \quad \text{as~}
  n\toi
\]
can be obtained, then we know which word has a frequency advantage
in the long run.

Let us first fix some notation.  For $x=(\eno x n)$, where $0\le n\le
\infty$, denote $|x|=n$ and refer to it as the length of $x$.  If
$n=0$, then $x$ is an empty word denoted by $\rx$.  For $k,m\ge1$, the
``look-back'' segment of length $m$ at location $k$ in $x$ is defined
to be
\[
  (x)^m_k =
  \begin{cases}
    (\eno[k-m+1] x k) & \text{if~} m\le k\le n
    \\
    \rx & \text{else}.
  \end{cases}
\]
If $y$ is a word of length $m\ge1$, denote 
\[
  N_x(y) = \sum^n_{k=m} \cf{(x)^m_k=y}.
\]
Then for each integer $n\ge0$, $C_n(y) = N_{(\eno X n)}(y)$.

Two cases can be quickly resolved for $\varphi_n(a,b)$.  First, from
$(C_n(a)/n, C_n(b)/n)\to (2^{-|a|}, 2^{-|b|})$ almost surely, if
$|a|\ne |b|$, then $\varphi_n(a,b)\to1$ or $-1$, depending on whether
or not $a$ is the shorter one.  Next, for $x=(\eno x n)\in \{0,1\}^n$,
denote by  $x^R=(x_n, \ldots,
x_1)$ its time reversal and by $\bar x = (\eno{1-x} n)$ its
``conjugate''.  If $\tilde{\ }$ is any one of the mappings $x\mapsto
x^R$,  $x\mapsto\bar x$, and $x\mapsto
\bar x^R$, then by $N_x(\cdot) = N_{\tilde x}(\tilde \cdot)$ and
$X:=(\eno X n)\sim \tilde X$,  $(C_n(a), C_n(b)) = (N_{\tilde
  X}(\tilde a), N_{\tilde X}(\tilde b)) \sim (N_X(\tilde a),
N_X(\tilde b))$, giving $\prob\{C_n(a)>C_n(b)\} = \prob\{C_n(\tilde a)
> C_n(\tilde b)\}$.  In particular, letting $b=\tilde a$, by $\tilde
b=a$, $\varphi_n(a, \tilde a)\equiv0$.  Thus, to see between $a$ and
$b$, which one has a frequency advantage over the other, we only need
to consider the case where
\begin{align} \label{e:assumption}
  b \not\in \{a, a^R, \bar a, \bar a^R\} \quad\text{and}
  \quad |a| =|b|.
\end{align}

Let $a,b\in\{0,1\}^L$ satisfy \eqref{e:assumption}.  Then $L>1$.  Let
$T_k$ be the times when $a$ or $b$ occurs in $\inum X$.
Specifically, letting $T_0=0$,
\begin{align} \label{e:Tk}
  T_k=
  \min\{t>T_{k-1}: (X)^L_t = a \text{~or~} b\}, \quad k\ge1.
\end{align}
Since $((X)^L_{L+t-1}, t\ge1)$ is a time-homogeneous irreducible
Markov chain in $\{0,1\}^L$, then $\prob\{T_k<\infty, k\ge1\}=1$.
Define
\begin{align} \label{e:Y-X}
  Y_k =
  \begin{cases}
    1 & \text{if~} (X)^L_{T_k}=a,
    \\
    -1 & \text{else,}
  \end{cases}
\end{align}
which form a time-homogeneous irreducible Markov chain.  Since
$C_n(u)/n\to 1/2^L$ a.s.\ for any $u\in \{0,1\}^L$, the stationary
distribution of $Y_n$ is uniform on $\pm1$.  As a result,
$\prob\{Y_2=1\gv Y_1=1\} = \prob\{Y_2=-1\gv Y_1=-1\}$.  Put
\begin{align} \label{e:tran-p}
  p = \prob\{Y_2=1\gv Y_1=1\}=1-q
\end{align}
and
\begin{align} \label{e:mean-gap}
  \mu_{c,e}=\mean(T_2 - T_1, Y_2=e\gv Y_1=c).
\end{align}
A general result is as follows.
\begin{theorem} \label{t:darboux-1}
  Fix $a,b\in\{0,1\}^L$ satisfying \eqref{e:assumption}.  Let $p_c =
  \prob\{Y_1=c\}$, $c=\pm1$. 
  Then
  \[
    \varphi_n(a,b)
    =
    \frac{(p_1-p_{-1}) 2^L + \mu_{1,-1} - \mu_{-1,1}}
      {\sqrt{2^{L+2} pq\pi n}} + o(1/\sqrt n), \quad n\toi.
  \]
\end{theorem}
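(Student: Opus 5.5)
The plan is to write $D_n := C_n(a)-C_n(b)=\sum_{k:\,T_k\le n} Y_k$, so that $\varphi_n(a,b)=\mean[\operatorname{sign} D_n]$. I would then treat $((Y_k,T_k),k\ge1)$ as a Markov renewal process on $\{\pm1\}$. Its kernel is
\[
  Q_{c,e}(z)=\mean\bigl(z^{T_2-T_1},\,Y_2=e\gv Y_1=c\bigr),
\]
and the initial law is $\mean(z^{T_1}, Y_1=c)$. These are rational functions of $z$, since $(X)^L_{L+t-1}$ is a finite Markov chain. Also $Q(1)=\bigl(\begin{smallmatrix}p&q\\ q&p\end{smallmatrix}\bigr)$ and $Q'(1)=(\mu_{c,e})$. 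The steps of $D$ at renewal epochs are $\pm1$, so $D$ changes by unit steps.

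\textbf{Decomposition at the last zero.} I would decompose $\{D_n>0\}$ and $\{D_n<0\}$ according to the last renewal epoch $\le n$ at which $D$ equals $0$ (or time $0$, where $D_0=0$). After that epoch, $D$ starts a one-signed excursion whose sign is the sign of the next step $Y$. Let $U_c(z)=\sum_n z^n\prob\{\text{some }T_k=n,\ D_{T_k}=0,\ Y_k=c\}$ be the generating function of zero visits (with $U$ including time $0$ and the initial law). Let $R^{\pm}_c(z)$ be the generating function of the probability that, starting from a zero with last step $c$, the excursion is still alive and positive (resp. negative) at time $n$, including the gap before the first step. Then
\[
  \sum_n \varphi_n z^n=\sum_c U_c(z)\bigl(R^+_c(z)-R^-_c(z)\bigr).
\]
All of these are algebraic functions built from $Q(z)$ via the first-passage generating functions of the Markov-modulated $\pm1$ walk. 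Those passage functions solve quadratic equations, as in the Wiener--Hopf or kernel method for a $2\times2$ modulated walk.

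\textbf{Singularity analysis.} Next I would locate the dominant singularity at $z=1$. There the zero-visit function behaves like $U\sim A(1-z)^{-1/2}$. Excursion-survival functions behave like $R^\pm \sim B^\pm (1-z)^{-1/2}$, since the survival probability is of order $n^{-1/2}$ and the excursion time is of order $n$. If the walk were fully symmetric under swapping $a\leftrightarrow b$, the leading terms of $R^+$ and $R^-$ would cancel. So the main computation is the next-order expansion of $R^+_c-R^-_c$ in powers of $(1-z)^{1/2}$: one shows $R^+-R^- = C+O((1-z)^{1/2})$, making the product of order $(1-z)^{-1/2}$ with an explicit constant. The asymmetry enters only through the initial imbalance $p_1-p_{-1}$ and through $Q'(1)$ via $\mu_{1,-1}-\mu_{-1,1}$; the differences $\mu_{1,1}-\mu_{-1,-1}$ should cancel or be tied to these by stationarity. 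Also, the mean gap $2^{L-1}$ converts renewal counts into time, which produces the $2^L$ factor multiplying $p_1-p_{-1}$. The variance of the modulated walk per step is $p/q$; combined with the time scaling, this gives $\sqrt{2^{L+2}pq\pi n}$ after simplification. I would check this constant against the heuristic $\varphi_n\approx 2\,\text{bias}\times(\text{density of } D_n\text{ at }0)$.

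\textbf{Tauberian step.} Finally I would apply a Darboux-type Tauberian theorem: if the generating function equals $K(1-z)^{-1/2}+o(|1-z|^{-1/2})$ near $z=1$ and is analytic on the rest of the closed unit disc (with a suitable algebraic expansion), then its coefficients are $K/\sqrt{\pi n}+o(n^{-1/2})$. To justify this I need that $z=1$ is the only singularity on $|z|=1$. This follows from aperiodicity of the gaps $T_2-T_1$ (the gap distribution has consecutive support) together with irreducibility, so that $\det(I-wQ(z))$ has no zeros with $|z|=1$, $z\ne1$, $|w|=1$.

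I expect the main obstacle to be the second-order expansion of $R^+-R^-$ at $z=1$. One must carry the branch of the square root consistently through the $2\times2$ kernel equations and verify that the combination collapses to exactly $(p_1-p_{-1})2^L+\mu_{1,-1}-\mu_{-1,1}$. I would first carry out this computation on the diagonalized symmetric part $Q(1)$ and treat $Q'(1)$ as a perturbation.
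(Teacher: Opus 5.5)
Your skeleton is the paper's: split the path into neutral periods and one-signed excursions of $D$, get a square-root branch point at $z=1$ from the quadratic satisfied by the excursion generating functions, then apply Darboux. But the step you single out as the main computation rests on a cancellation that does not occur. Let $\phi_{\pm1}$ be the generating functions of the durations of positive/negative excursions, and let $g_{c,e}(z)=\mean(z^{T_2-T_1}\gv Y_1=c,Y_2=e)$. Then $1-\phi_{\pm1}=\tfrac12\sqrt{\Delta}+O(1-z)$ for both signs, where $\Delta(z)\sim 2^{L+2}(p/q)(1-z)$ is the discriminant of that quadratic. A zero entered by a step $c$ is followed by an excursion of sign $c$ with probability $p$. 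So with your definition (gap included),
\[
  R^+_c-R^-_c = c\,(p-q)\,\frac{\sqrt{\Delta}}{2(1-z)}+O(1),
\]
which is of order $(1-z)^{-1/2}$, not $O(1)$, whenever $p\ne q$ (the generic case). The swap $a\leftrightarrow b$ exchanges $R^+_c$ with $R^-_{-c}$, not with $R^-_c$. Hence each $U_c(R^+_c-R^-_c)$ is of order $(1-z)^{-1}$. The cancellation happens only after summing over $c$, and it requires $U_1-U_{-1}=O(1)$. The finite part of $U_1-U_{-1}$, multiplied by $(p-q)\sqrt{\Delta}/(1-z)$, then contributes at the critical order $(1-z)^{-1/2}$. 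This cross term is what produces the correct coefficient of $p_1-p_{-1}$: the excursion signs flip with probability $p$, so the initial imbalance is carried with total weight $\sum_{j\ge1}(q-p)^{j-1}=1/(2p)$. Expanding only $R^+_c-R^-_c$, as you plan, yields a wrong constant.

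More basically, the explicit constant is the entire content of the theorem, and your proposal never derives it; the heuristics offered in its place are not mechanisms. There is no stationarity relation tying $\mu_{1,1}-\mu_{-1,-1}$ to the other quantities; in the paper's example $a=1^s0$, $b=1^{s+1}$ it equals $2^s-1$. It drops out because $\phi_{\pm1}=V/g_{\mp1,\pm1}$ with a common factor $V$, so $\phi_{-1}-\phi_1$ involves only the cross gaps. Meanwhile $\mu_{1,1},\mu_{-1,-1}$ enter only through $\Delta'(1)=-4(p/q)\mu_*$, with $\mu_*=\sum_{c,e}\mu_{c,e}=2^L$ by Kac's formula. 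Your sanity check ``$2\times$bias$\times$density'' is also degenerate: $\mean D_n=0$ exactly under the uniform start, so the effect comes from the skewness of $D_n$, not a mean shift.

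A clean way to finish is to decompose at excursion starts rather than at zeros. Let $\theta_j,\omega_j$ be the start time and sign of the $j$th excursion and $W_e=\sum_j\mean(z^{\theta_j},\omega_j=e)$. Then
\[
  \sum_n\varphi_n z^n=\frac{(1-\phi_1)(W_1-W_{-1})+(\phi_{-1}-\phi_1)W_{-1}}{1-z}.
\]
Using $W_1-W_{-1}\to(p_1-p_{-1})/(2p)$, $(\phi_{-1}-\phi_1)/(1-z)\to(\mu_{1,-1}-\mu_{-1,1})/q$ and $W_{-1}\sim\Delta^{-1/2}$ gives exactly the stated constant. The paper instead proves the statement conditional on the initial word and rationalizes to $A+B/(C\sqrt{\Delta})$.

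Finally, isolating $z=1$ on the unit circle needs an actual argument. The paper shows that a zero of $\Delta$ in $\bar\udisc\setminus\{1\}$ would force $|g_{c,e}(z_0)|=1$ for all $c,e$, contradicting aperiodicity of one of the gap laws. It handles the zeros of the renewal denominator separately by Perron--Frobenius.
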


\begin{remark}{\ }
  \begin{enumerate}
  \item By $\prob\{Y_1=1\} \ge \prob\{(X)^L_L=a\}$, $p_1>0$.
    Likewise, $p_{-1}>0$.  On the other hand, it will be shown that
    $p\in (0,1)$, which implies $\mu_{c,e}>0$ and hence
    $\lambda_{\pm1}$ and $\mu$ are well defined and positive.
  \item There is a standard method to calculate $p_c$, $p$, and
    $\mu_{c,e}$, $c,e=\pm1$; see \cref{s:first-step}.
  \end{enumerate}
\end{remark}

As an example, let $a=1^s0$ and $b=1^{s+1}$, where for $c\in\{0,1\}$,
$c^s$ denotes the sequence of $s$ consecutive $c$'s.  It is not hard
to check that $p_1 = p_{-1}= p=q=1/2$, $\mu_{-1,1} = \mu_{-1,-1} =
1/2$, and $\mu_{1,1} = \mu_{1,-1}$.  Then by 
\begin{align} \label{e:sum-mean}
  \sum_{c,e=\pm1} \mu_{c,e} = 2^{s+1},
\end{align}
$\mu_{1,-1} = 2^s-1/2$.  Then by \Cref{t:darboux-1},
\[
  \varphi_n(1^s0, 1^{s+1}) \sim \frac{2^s-1}{\sqrt{2^{s+1}\pi n}},
  \quad n\toi.
\]
In particular, $\varphi_n(10,11) \sim 1/\sqrt{4\pi
  n}$.  A proof of \eqref{e:sum-mean} is given in \cref{s:first-step}.

The table below displays comparisons between words of length 3.  For
each pair $(a,b)$, the corresponding $\lambda$ is the coefficient in
the asymptotics $\varphi_n(a,b) = \lambda/\sqrt{\pi n} +
o(1/\sqrt n)$.  Pairs not in the table either fail to satisfy
\eqref{e:assumption}, resulting in $\varphi_n(a,b)\equiv0$ or can be
transformed via time reversal, conjugation, or their composite to one
of the pairs in the table:
\[
  \begin{array}{rcccccc}
    (a,b): & (111,110) & (111,101) & (111,100) &(111,010) & (110,101) &
    (110,010) \\ 
    \lambda: & -\frac3{2\sqrt2} & -\nth{\sqrt 3} & -\frac{\sqrt 3}2
    & -\nth2 & \nth2 & \nth{2\sqrt2}
  \end{array}
\]

It is seen that for all $a,b\in\{0,1\}^3$ satisfying
\eqref{e:assumption}, $\varphi_n(a,b)$ has the same order as $1/\sqrt
n$.  However, this is not always the case for longer words.  For
example, for $a=0001$ and $b=1100$, $\lambda=0$.  From the derivation
in the paper, $\varphi_n(0001, 1100)$ is either of the same order as
$1/n^{k+1/2}$ for some positive integer $k$, or $O(\varrho^{-n})$ for
some $\varrho>1$.  Unfortunately, we did not have enough computing
capacity to determine which is the case.

The main result of the paper is the following and will be proved in \cref{s:gf-step1,s:gf-step2,s:singular}.

\begin{theorem} \label{t:darboux-0}
  Fix $v,a,b\in\{0,1\}^L$ with $a$ and $b$ satisfying
  \eqref{e:assumption}.  Let
  \[
    \psi^v_n(a,b) = \prob\{C_n(a)>C_n(b)\gv (X)^L_L=v\}-1/2
  \]
  and
  \begin{align}\label{e:Lambda-deriv-diff}
    p^v_c = \prob\{Y_1=c\gv (X)^L_L=v\}, \quad c=\pm1.
  \end{align}
  Let
  \begin{align} \label{e:beta}
    \beta^v_{a,b}=
    (\lfrac p2 - p^v_{-1}) 2^L +\frac{\mu_{1,-1} - \mu_{-1,1}}2.
  \end{align}
  Then
  \begin{align} \label{e:psi-n}
    \psi^v_n(a,b)=\frac{\beta^v_{a,b}}{\sqrt{2^{L+2}pq\pi n}} +
    o(1/\sqrt n),
    \quad n\toi.
  \end{align}
\end{theorem}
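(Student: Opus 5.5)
We will reduce the problem to a Markov renewal (semi-Markov) structure driven by $(Y_k, T_k)$ and analyse it with generating functions and a Darboux-type Tauberian argument. Conditional on $(X)^L_L=v$, the difference $D_n:=C_n(a)-C_n(b)$ equals $S_{K_n}$, where $S_k=\sum_{j\le k}Y_j$ and $K_n=\max\{k:T_k\le n\}$. By the strong Markov property of the chain $((X)^L_{L+t-1})$, once $T_1$ occurs the pairs $(T_{k+1}-T_k, Y_{k+1})$ depend on the past only through $Y_k$. The first pair $(T_1,Y_1)$ has a $v$-dependent law. The initial-segment contribution enters only through $p^v_c$ and $\mean(T_1\cf{Y_1=c})$. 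The latter should cancel or be absorbed at order $1/\sqrt n$: it shifts time by $O(1)$, which changes probabilities only by $O(1/n)$.

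\textbf{Step 1: generating functions.} I will introduce the bivariate generating functions
\[
F^v(z,w)=\sum_{n\ge0} z^n\,\mean\bigl[w^{D_n}\gv (X)^L_L=v\bigr],
\]
or directly $G^v(z)=\sum_n z^n \prob\{D_n>0\gv v\}$. To build these, let $A(z)$ be the $2\times 2$ matrix with entries $A_{c,e}(z)=\mean(z^{T_2-T_1};Y_2=e\gv Y_1=c)$, and let the initial vector be $\alpha^v_c(z)=\mean(z^{T_1};Y_1=c\gv v)$. All of these are rational in $z$, computed by first-step analysis, and analytic in a disk of radius greater than $1$. Summing over the number of renewals expresses $F^v(z,w)$ as
\[
\frac{1}{1-z}\,\alpha^v(z)^{\top}\Bigl(I-\Delta(w)A(z)\Bigr)^{-1}\Delta(w)(\mathbf 1-A(z)\mathbf 1),
\]
with $\Delta(w)=\mathrm{diag}(w,w^{-1})$ (the $n<T_1$ term is added separately). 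Extracting the positive part in $w$ is a contour integral in $w$ over a circle. I will carry this out by diagonalising $\Delta(w)A(z)$: its eigenvalue $\rho(z,w)$ near $(1,1)$ satisfies $\rho(1,w)$ with $\rho(1,w)=\tfrac12[p(w+w^{-1})+\sqrt{p^2(w-w^{-1})^2+4q^2}]$ at $z=1$, using symmetry $A_{11}(1)=A_{-1,-1}(1)=p$. This gives $\rho(1,1)=1$, $\partial_w\rho=0$, and $\partial_w^2\rho\propto p/q$. That quantity produces the variance $pq$-type constant, which combined with the mean renewal time $2^L$ yields $2^{L+2}pq$.

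\textbf{Step 2: singularity analysis.} Near $z=1$, the solution $w_\pm(z)$ of $\rho(z,w)=1$ behaves like $1\pm c\sqrt{1-z}$. Residue extraction then gives
\[
G^v(z)=\frac{1/2}{1-z}+\frac{\beta^v_{a,b}/\sqrt{2^{L+2}pq}}{\sqrt{1-z}}+H(z),
\]
where $H$ is less singular, of the form $(1-z)^{1/2}\times$ analytic, plus analytic terms. The constant $1/2$ comes from the symmetry $\varphi_n(a,\tilde a)$-type argument, i.e.\ the uniform stationary law. The coefficient of $(1-z)^{-1/2}$ collects three first-order corrections:
\begin{itemize}
\item the initial bias $p^v_1-p^v_{-1}$, rewritten using $p^v_1+p^v_{-1}=1$, which gives the term $(p/2-p^v_{-1})2^L$ after combining with the tie term $\prob\{D_n=0\}$;
\item the asymmetry $\partial_z A_{1,-1}(1)-\partial_z A_{-1,1}(1)=\mu_{1,-1}-\mu_{-1,1}$;
\item the tie/boundary half-counting.
\end{itemize}
The main obstacle is this bookkeeping, and it is exactly where $\beta^v_{a,b}$ in \eqref{e:beta} must emerge. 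I will do it by expanding the eigenprojector to first order in $(w-1)$ and $(z-1)$.

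\textbf{Step 3: Tauberian transfer.} We must show that $z=1$ is the only singularity on $|z|=1$. This needs aperiodicity: $\rho(z,w)=1$ with $|z|=1$, $|w|=1$ forces $z=1$, which holds because $T_2-T_1$ has gcd $1$ (words overlap shifts) and $p\in(0,1)$. Given that, the Darboux-type Tauberian theorem gives $[z^n](1-z)^{-1/2}\sim 1/\sqrt{\pi n}$ and $[z^n]H=o(n^{-1/2})$, which yields \eqref{e:psi-n}. Theorem~\ref{t:darboux-1} should then follow by averaging over $v$ and antisymmetrising in $(a,b)$.
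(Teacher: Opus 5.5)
Your Markov-additive set-up is sound, and it is a genuinely different route from the paper's neutral/dominant-period decomposition. The formula for $F^v(z,w)$ is right, as are $\rho(1,w)$ and $\partial_w^2\rho(1,1)=p/q$. It is also true that the law of $T_1$ matters at order $(1-z)^{-1/2}$ only through $\alpha^v(1)=(p^v_1,p^v_{-1})$. The discriminant of your kernel, $(1+\det A)^2-4A_{1,1}A_{-1,-1}=(1-\det A)^2-4A_{1,-1}A_{-1,1}$, is exactly the paper's $q^2\Delta$, so both routes meet at the same branch point.

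However, Step~2 never computes the coefficient of $(1-z)^{-1/2}$. It writes down $\beta^v_{a,b}/\sqrt{2^{L+2}pq}$ and lists three ``corrections'' read off from the target formula. Since the value of $\beta^v_{a,b}$ is the whole content of the theorem, this is the missing core, and the bookkeeping is not innocuous. Expanding the residue at $w_-(z)$ to relative order $\sqrt{1-z}$ brings in the $(1-z)$-term of $w_-(z)$, hence $\partial_z\partial_w\rho(1,1)=(\mu_{1,1}-\mu_{-1,-1})/(2q)$. This combination does not occur in $\beta^v_{a,b}$ and need not vanish: it equals $2^s-1$ for $a=1^s0$, $b=1^{s+1}$. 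So you must show that it cancels against the eigenprojector, $\alpha^v(z)$, $\mathbf 1-A(z)\mathbf 1$ and remaining residue contributions, while $\mu_{1,-1}-\mu_{-1,1}$ survives with coefficient exactly $\frac12$. You also need \eqref{e:sum-mean}, $\sum_{c,e}\mu_{c,e}=2^L$, which is the only source of the factor $2^L$. Note that the stationary mean of $T_2-T_1$ is $\frac12\sum_{c,e}\mu_{c,e}=2^{L-1}$, not $2^L$ as you state. The paper does this bookkeeping explicitly: it writes $\Psi^v_{a,b}$ as an analytic part plus $B/(C\sqrt\Delta)$ and computes $B(1)=-p^2\beta^v_{a,b}/q$, $C(1)=-p^2$ and $\Delta(z)\sim 2^{L+2}(p/q)(1-z)$.

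Step~3 is not justified as stated. On the torus $|\Delta(w)A(z)|\le A(1)$ entrywise. So by Wielandt's theorem, a zero of $\det(I-\Delta(w)A(z))$ with $|z|=|w|=1$ forces $g_{1,1}(z)g_{-1,-1}(z)=g_{1,-1}(z)g_{-1,1}(z)=1$, which is precisely what \Cref{l:Delta-zero} rules out. Having $\gcd=1$ for the support of $T_2-T_1$, together with $p\in(0,1)$, does not exclude this. Take $a=11$, $b=00$: $G_{1,1}$ and $G_{-1,-1}$ are carried by the odd integers and $G_{1,-1}$, $G_{-1,1}$ by the even ones. Then $T_2-T_1$ takes the values $1$ and $2$ and $p=2/3$, yet $w\det(I-\Delta(w)A(-1))=p(w+1)^2$, giving a second branch point at $z=-1$. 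That pair is excluded only by $b\ne\bar a$ in \eqref{e:assumption}, which your argument never uses. What is actually needed is that some single $G_{c,e}$ is not carried by a coset of a proper subgroup of $\Ints$. The paper obtains this by taking $a,b\ne 1^L$ (allowed by \eqref{e:assumption}) and inserting $1^n$ into a first-passage path through $1^L$, so that one of $G_{c,c}$, $G_{c,-c}$ charges all large integers. Likewise $p>0$ is not automatic: it is part (c) of \Cref{p:nonempty}, and it fails for the excluded pair $\{10^{L-1},0^{L-1}1\}$.
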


Assuming \Cref{t:darboux-0} is correct for now, then
\begin{align*}
  \varphi^v_n(a,b)
  &:=
  \prob\{C_n(a)>C_n(b)\gv (X)^L=v\} -\prob\{C_n(b)>C_n(a)\gv (X)^L=v\}
  \\
  &=
  \psi^v_n(a,b) - \psi^v_n(b,a)
  =
  \frac{\beta^v_{a,b}-\beta^v_{b,a}}{\sqrt{2^{L+2}pq\pi n}} +
  o(1/\sqrt n).
\end{align*}
By symmetry, $\beta^v_{b,a}$ is obtained from \eqref{e:beta} by
switching $1$ and $-1$, giving
\[
  \beta^v_{a,b} - \beta^v_{b,a} =(p^v_1 - p^v_{-1})2^L + \mu_{1,-1}
  - \mu_{-1,1}. 
\]
Since $\varphi_n(a,b)$ and $p_{\pm1}$ are the averages of
$\varphi^v_n(a,b)$ and $p^v_{\pm1}$ over $v\in \{0,1\}$, respectively,
then \Cref{t:darboux-1} follows.

Two corollaries are mentioning.  First, let $v=a$ in
\Cref{t:darboux-1}.  Then by \eqref{e:sum-mean} and
$p^a_1 = 1 = 1-p^a_{-1}$, 
\[
  \varphi^a_n(a,b)
  =\frac{\mu_{1,1}+\mu_{-1,-1} + 2\mu_{1,-1}}{
    \sqrt{2^{L+2} pq\pi n}} + o(1/\sqrt n).
\]
In other words, if $a$ is the initial word of $\inum X$, then it has
frequency advantage over $b$ for $n\gg1$.  Second, since
$\varphi^v_n(a,b) + \varphi^v_n(b,a) = -\prob\{C_n(a) = C_n(b)\gv
(X)^L_L=v\}$ and $p^v_1 + p^v_{-1}=1$, then by \Cref{t:darboux-0},
\[
  \prob\{C_n(a) = C_n(b)\gv (X)^L_L=v\}
  = \sqrt{\frac{2^{L-2} q}{p\pi n}} + o(1/\sqrt n), \quad n\toi.
\]

\section{Preliminaries} \label{s:prelim}
This section establishes some basic facts, one of them being the
transition probabilities $p$ and $q$ in \eqref{e:tran-p} are positive.
Denote by $\{0,1\}^\#$ is set of non-empty words.  For $u=(\eno u n)$
and $v=(\eno v m)$, denote by $uv = (\eno u n, \eno v m)$ their
concatenation.  Denote $u^0=\rx$ and $u^s = u^{s-1}u$ for $s>0$.

\begin{prop} \label{p:nonempty}
  Let $a,b\in \{0,1\}^L$.  For $v\in\{0,1\}^\#$, define 
  \begin{align} \label{e:Pi-path}
    \Pi^v_{a,b}
    =
    \{x\in\{0,1\}^\#: (vx)^L_{L+t}\ne a\text{~or~} b \text{~for~}
    0<t<|x|, (vx)^L_{L+|x|} = a\}. 
  \end{align}
  If $a\ne b$, then the following are true.
  \begin{subenum}
  \item\label{i:nonempty1}
    $\Pi^a_{b,a} \ne \emptyset$.
  \item\label{i:nonempty1x}
    For any $v\in \{0,1\}^L\setminus\{a,b\}$, $\Pi^a_{v,b} \cup
    \Pi^b_{v,a} \ne \emptyset$ and $\Pi^v_{a,b}\cup \Pi^v_{b,a} \ne
    \emptyset$.
  \item\label{i:nonempty2}
    $\Pi^a_{a,b}\ne\emptyset\Iff \{a,b\}\ne\{(1-x)x^{L-1},
    x^{L-1}(1-x)\}$, where $x=0$ or 1.
  \end{subenum}
\end{prop}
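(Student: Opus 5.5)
We will translate the statement into the de Bruijn graph $G_L$. Its vertex set is $\{0,1\}^L$, and it has an edge $u\to u'$ whenever $u'=(u_2,\ldots,u_L,c)$ for some $c\in\{0,1\}$. A word $x\in\{0,1\}^\#$ appended to $v$ corresponds to the walk $v=(vx)^L_L\to(vx)^L_{L+1}\to\cdots\to(vx)^L_{L+|x|}$, and every walk in $G_L$ starting at $v$ arises this way. Hence $\Pi^v_{a,b}\neq\emptyset$ exactly when there is a walk of length $\ge1$ from $v$ whose first visit to $\{a,b\}$ after time $0$ is at $a$. The facts used are that $G_L$ is strongly connected and that a shortest path between two vertices visits no vertex twice.

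For (a), take a shortest path from $a$ to $b$. Its intermediate vertices are distinct from both endpoints, so it never passes through $a$ or $b$ before its end, which is $b$; thus it lies in $\Pi^a_{b,a}$.

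For the first part of (b), take a shortest path $P$ from $a$ to $v$. If $P$ avoids $b$, it lies in $\Pi^a_{v,b}$. Otherwise $P$ passes through $b$. Its final segment from $b$ to $v$ is a shortest path that does not contain $a$, since $a$ occurs on $P$ only at the start. That segment lies in $\Pi^b_{v,a}$. For the second part, take a shortest path from $v$ to the set $\{a,b\}$. It meets $\{a,b\}$ only at its end, so it lies in $\Pi^v_{a,b}$ or in $\Pi^v_{b,a}$.

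Part (c) asks when $a$ lies on a closed walk in $G_L$ that avoids $b$; this is the main step.

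\emph{Necessity.} Take $x=0$, so $a=10^{L-1}$ and $b=0^{L-1}1$; the case $x=1$ follows by conjugation. Every walk leaving $a$ first moves through words of the form $0^{L-j}$ followed by $j$ zeros. Returning to $a$ requires a later $1$ to be appended, and the first $1$ appended after $a$ produces the window $0^{L-1}1=b$. If instead the pair is swapped ($a=0^{L-1}1$, $b=10^{L-1}$), use time reversal: the windows preceding $a$ must contain a $1$ followed by $L-1$ zeros before the final $1$, which is again $b$.

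\emph{Sufficiency.} First try the cycle of rotations, $x=a$, whose windows are the cyclic shifts $(a_{i+1},\ldots,a_L,a_1,\ldots,a_i)$. If $b$ is not a proper rotation of $a$, this word lies in $\Pi^a_{a,b}$. So assume $b$ is a nontrivial rotation of $a$; then $a$ is not constant.

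The plan is to build a different closed walk through $a$. Choose the appended symbol $c$ so that the first step avoids $b$; this is possible because $a$ has two out-neighbours and at most one of them is $b$. Then travel, for instance along a run of a constant symbol, to a vertex from which $a$ can be re-entered through the in-neighbour of $a$ that is not $b$. The closing segment should be a shortest path into that in-neighbour, chosen to avoid $b$.

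The delicate point is checking that some such pair of choices avoids $b$. A vertex $b\neq a$ can block every cycle through $a$ only if it separates $a$ from itself, that is, if it is simultaneously the unique usable successor route and the unique usable predecessor route. The intended lemma is that $G_L\setminus\{b\}$ keeps $a$ inside a nontrivial strongly connected component. I would prove this by examining the two out-neighbours $a_2\cdots a_Lc$ and the two in-neighbours $c\,a_1\cdots a_{L-1}$. Removing $b$ destroys all cycles through $a$ only if every walk from $a$ is forced into $b$, and every walk into $a$ is forced out of $b$. Running the necessity argument backwards, this forcing happens only when $a$ and $b$ are the two words with exactly one non-$x$ symbol placed at an end. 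I expect this last verification to be the main obstacle.
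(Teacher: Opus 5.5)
Your arguments for (a) and (b) are correct and are essentially the paper's shortest-path arguments. For the second half of (b), your shortest path from $v$ to $\{a,b\}$ is more direct than the paper's time reversal. Your check that $\Pi^a_{a,b}=\emptyset$ for the two exceptional pairs is also fine.

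The gap is the other direction of (c), which is the real content of the proposition. When $b$ is a nontrivial rotation of $a$ and $\{a,b\}$ is not exceptional, you never exhibit an element of $\Pi^a_{a,b}$. You give a plan and an ``intended lemma'' whose proof is deferred (``I expect this last verification to be the main obstacle''). Worse, the suggested method of examining the two out- and in-neighbours of $a$ cannot work, because the obstruction is not local. Take $a=10^{L-1}$, $b=0^{L-1}1$ with $L\ge 3$:
\begin{itemize}
\item $a$ has the out-neighbour $0^L\ne b$;
\item neither in-neighbour $010^{L-2}$ nor $110^{L-2}$ equals $b$;
\item so $b$ is not ``simultaneously the unique successor and predecessor route'';
\item yet every closed walk through $a$ hits $b$, because returning to $a$ requires appending a $1$, and the first $1$ appended after leaving $a$ produces the window $0^{L-1}1$.
\end{itemize}
So choosing a first step that avoids $b$ and re-entering through an in-neighbour different from $b$ guarantees nothing. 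Read globally, your criterion is just a restatement of $\Pi^a_{a,b}=\emptyset$, and ``running the necessity argument backwards'' is not an argument.

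The missing idea, which the paper uses, is to turn emptiness into a constraint on words and test it on well-chosen words. If $\Pi^a_{a,b}=\emptyset$, then for every word $w$ (including the empty word) the walk spelled by $awa$ must pass through $b$ before its final window. Otherwise its first return to $a$ would be an element of $\Pi^a_{a,b}$. By conjugation, assume without loss of generality that $b$ ends in $1$, and take $n>L$. Then:
\begin{itemize}
\item $w=0^n$ forces $b$ to begin with $0$;
\item $w=0^n1^n$ then forces $b=0^s1^{L-s}$ with $1\le s<L$;
\item $w=(01)^n$ forces $s\in\{1,L-1\}$;
\item $w=0^n$ (if $s=1$) or $w=1^n$ (if $s=L-1$), together with the empty $w$, force $a=1^{L-1}0$ or $a=10^{L-1}$ respectively.
\end{itemize}
This handles all cases at once, so your split according to whether $b$ is a rotation of $a$ is unnecessary. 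That split also has a small slip. If $a$ is a proper power, e.g.\ $a=(01)^{L/2}$, the word $x=a$ returns to $a$ before time $L$, so $x\notin\Pi^a_{a,b}$. You must instead take the shortest prefix of $a$ that returns to $a$.
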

\begin{proof}
  \ref{i:nonempty1}
  Let $E_m = \{(X)^L_L=a, (X)^L_{L+m-1}=b\}$.  Since $(X)^L_{L+t-1}$,
  $t\ge1$, is an irreducible time-homogeneous Markov chain on
  $\{0,1\}^L$, $n=\min\{m: \prob(E_m)>0\}$ exists.  Every realization
  of $(X)^n_n$ in $E_n$ is an element of $\Pi^a_{b,a}$.

  \ref{i:nonempty1x}
  Following the above proof but with $E_m = \{(X)^L_L=a$ or $b,
  (X)^L_{L+m-1}=v\}$, $\Pi^a_{v,b}\cup \Pi^b_{v,a}\ne\emptyset$.
  Next, $\Pi^{a^R}_{v^R,b^R}\cup \Pi^{b^R}_{v^R,a^R}\ne\emptyset$.
  Since $\Pi^v_{a,b} = \{\gamma^R: \gamma\in \Pi^{a^R}_{v^R,b^R}\}$
  and likewise for $\Pi^v_{b,a}$, then $\Pi^v_{a,b}\cup
  \Pi^v_{b,a}\ne\emptyset$.
  
  \ref{i:nonempty2}
  We follow an argument in \cite{chi:26:spl}.  Suppose
  $\Pi^a_{a,b}=\emptyset$.  Then for any $w\in \{\rx\}\cup 
  \{0,1\}^\#$, $awa$ contains $b$.  Fix $n>L$.  Without loss of
  generality, suppose the last letter of $b$ is 1.  Then, as $b$
  appears in $a0^na$ and is not equal to $a$, it must start within
  $0^n$ and end within the last $a$.  In particular, its first letter
  has to be 0.  Then, as $b$ appears in $a 0^n 1^n a$, it
  must appear within $0^n 1^n$, so $b = 0^s 1^{L-s}$ for some $1\le
  s<L$.  Then, as $b$ appears in $a(01)^n a$, $s$ has to be 1 or
  $L-1$.  If $s=1$, then as $b=01^{L-1}$ appears in $a0^n a$ and $aa$,
  $a$ has to be $1^{L-1}0$.  If $s=L-1$, then as $b = 0^{L-1}1$
  appears in $a1^na$ and $aa$, $a$ has to be $10^{L-1}$.  It is easy
  to check that in either case, $\Pi^a_{a,b}=\emptyset$.
\end{proof}

Recall that for a prefix-free set $S\subset\{0,1\}^\#$,
\[
  \prob(S):=\prob\{(X)^n_n\in S\text{~for some~} n\ge1\} =
  \sum_{x\in S} 2^{-|x|}
\]
and if $S\ne\emptyset$, then for each word $x\in S$ of length $l$,
\[
  \prob(x\gv S)
  := \prob\{(X)^l_l = x\gv (X)^n_n\in S\text{~for some~} n\} =
  \frac{2^{-l}}{\prob(S)}.
\]

From its definition, $\Pi^v_{a,b}$ is prefix-free.  It is easy to see
that, if $|v|=l$, then letting $T = \min\{t>l: (X)^L_t = a \text{~or~}
b\}$,
\[
  \prob(\Pi^v_{a,b}) = \prob\{(X)^L_T = a\gv (X)^l_l=v\}.
\]

For $a$ and $b$ in \Cref{t:darboux-1,t:darboux-0}, since they satisfy
\eqref {e:assumption}, from \Cref{p:nonempty}, $\Pi^a_{a,b}\ne
\emptyset$ and $\Pi^a_{b,a}\ne\emptyset$.  As a result, $p\in (0,1)$.

To prove \Cref{t:darboux-0}, the following Darboux-type
of Tauberian theorem will be used (cf.\ \cite{lalley:01:cm,
  bender:74:siam}).  Denote $\udisc = \{z\in\Coms: |z|<1\}$.  For
Borel set $S\subset \Coms$, denote by $\bar S$ its closure and for
$r>0$, $r S = \{rz: z\in S\}$.
\begin{theorem} \label{t:darboux}
  Let $G(z) = \sumzi n a_n z^n$ be a power series with radius of
  convergence $r$.  Suppose that $G$ has an analytic extension around
  every $z\ne r$ with $|z|=r$, and that $G(z) = A(z)(1 - z/r)^\alpha +
  B(z)$ in $(r\udisc)\cap U$, where $U$ is an open set containing $r$,
  $A(z)$ and $B(z)$ are analytic in $U$, $A(r)\ne 0$, and
  $\alpha\in\Reals\setminus\{0, -1, -2, \ldots\}$.  Then
  \[
    a_n = \frac{A(r)}{\Gamma(-\alpha) n^{1+\alpha} r^n} +
    o\Grp{\nth{n^{1+\alpha} r^n}} \quad
    \text{as~} n\toi.
  \]
\end{theorem}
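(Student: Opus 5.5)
The plan is the classical Darboux argument: peel off enough explicit singular terms at $z=r$ that what remains is smooth on the circle of convergence, then read off coefficients from explicit binomial expansions together with a Riemann--Lebesgue estimate.

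\emph{Reduction to $r=1$.} Replacing $G(z)$ by $G(rz)$ changes $a_n$ to $a_n r^n$ and $(1-z/r)^\alpha$ to $(1-z)^\alpha$, so I assume $r=1$. By hypothesis each $z$ with $|z|=1$, $z\ne1$, has a disc on which $G$ continues analytically. $U$ contains a small closed disc $D_\delta$ about $1$. The arc of the unit circle outside $D_\delta$ is compact, so finitely many of these discs cover it. Hence $G$ extends analytically to an open set $\Omega$ that contains $\bar{\udisc}\setminus D_\delta$. On $\udisc\cap U$ we have $G=A(z)(1-z)^\alpha+B(z)$, with the principal branch. This is legitimate because $\Re(1-z)>0$ for $z\in\bar{\udisc}\setminus\{1\}$.

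\emph{Subtracting singular terms.} Fix an integer $k\ge \alpha+2$ and choose $m$ so large that $\alpha+m>k+1$. Taylor expand $A(z)=\sum_{j<m}A_j(1-z)^j+(1-z)^mA^*(z)$ on $U$, where $A_0=A(1)$ and $A^*$ is analytic. Set
\[
H(z)=G(z)-\sum_{j<m}A_j(1-z)^{\alpha+j}.
\]
The key steps are as follows.
\begin{enumerate}
\item Near $1$ we have $H=(1-z)^{\alpha+m}A^*(z)+B(z)$. Away from $1$, $H$ is analytic across the circle.
\item The boundary function $h(\theta)=H(e^{i\theta})$ is therefore $C^k$ on the circle. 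Indeed, derivatives of $(1-e^{i\theta})^{\alpha+m}$ up to order $k$ are $O(|\theta|^{\alpha+m-k})\to0$.
\item Since $H$ is analytic in $\udisc$ and continuous on $\bar\udisc$, its Taylor coefficients equal the Fourier coefficients of $h$. This follows from Cauchy's formula on radius $\rho<1$ and letting $\rho\to1$ by uniform continuity.
\item Integrating by parts $k$ times and applying Riemann--Lebesgue, the coefficients of $H$ are $o(n^{-k})=o(n^{-1-\alpha})$.
\end{enumerate}

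\emph{Coefficients of the explicit terms.} The $n$-th coefficient of $(1-z)^\beta$ is $\binom{n-\beta-1}{n}=\Gamma(n-\beta)/(\Gamma(-\beta)\Gamma(n+1))$. This is asymptotic to $n^{-\beta-1}/\Gamma(-\beta)$ when $\beta\notin\{0,1,2,\ldots\}$, by Stirling or the ratio $\Gamma(n+s)/\Gamma(n+t)\sim n^{s-t}$. When $\beta$ is a nonnegative integer, $(1-z)^\beta$ is a polynomial and its coefficients vanish for $n>\beta$.
\begin{enumerate}
\item For $j=0$, $\beta=\alpha$ is not a nonnegative integer by hypothesis. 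This term contributes $A(1)n^{-1-\alpha}/\Gamma(-\alpha)\,(1+o(1))$, and this is nonzero since $A(1)\ne0$.
\item For $1\le j<m$, the terms contribute $O(n^{-1-\alpha-j})=o(n^{-1-\alpha})$, or eventually $0$.
\end{enumerate}
Summing these with the coefficients of $H$ and undoing the rescaling gives the claimed asymptotics.

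The main obstacle is the boundary-regularity step. One must check that $H$ is genuinely $C^k$ on the whole unit circle, including uniformity near $\theta=0$ where the branch cut lies. One must also check that the extensions obtained from the finitely many discs glue consistently with the formula on $U$; this holds by the identity theorem, since every piece agrees with $G$ on the overlap with $\udisc$. I would handle the regularity explicitly with the bound $|d^i/d\theta^i (1-e^{i\theta})^{\gamma}|\le C|\theta|^{\gamma-i}$ for small $\theta$.
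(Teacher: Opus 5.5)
The paper does not prove this statement. It quotes it as a known Darboux-type Tauberian theorem, citing Lalley and Bender's survey, so there is no in-paper argument to compare yours with. What you propose is the classical Darboux argument: remove finitely many terms of the local expansion at $z=r$ so that the remainder has $C^k$ boundary values, identify its Taylor coefficients with Fourier coefficients, then integrate by parts and apply Riemann--Lebesgue. It is correct. In particular:
\begin{itemize}
\item the principal branch of $(1-z)^{\alpha+j}$ is analytic off $[1,\infty)$;
\item the bounds $O(|\theta|^{\alpha+m-i})$ give $C^k$ regularity at $\theta=0$ once $\alpha+m>k$;
\item continuity of $H$ on $\bar\udisc$ justifies passing from radius $\rho<1$ to the circle;
\item the local continuations glue, since two intersecting discs centred on the circle have a convex intersection that meets $\udisc$;
\item the coefficient asymptotics of $(1-z)^\beta$ are right.
\end{itemize}

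The one point to repair is your sentence that $\alpha$ is not a nonnegative integer ``by hypothesis''. The statement as printed excludes $\{0,-1,-2,\ldots\}$, so $\alpha\in\{1,2,\ldots\}$ is formally allowed. In that case your $j=0$ term $(1-z)^\alpha$ is a polynomial with eventually vanishing coefficients, and $\Gamma(-\alpha)$ is undefined, so your argument as written does not cover it.

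You should dispose of this case by noting that it is vacuous. If $\alpha$ is a positive integer, then $A(z)(1-z/r)^\alpha+B(z)$ is analytic on all of $U$. Together with the continuations at the other boundary points, $G$ then extends analytically to an open set containing the closed disc of radius $r$. By compactness that set contains some disc $r'\udisc$ with $r'>r$, which contradicts the radius of convergence being $r$.

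Conversely, your argument goes through unchanged for negative integer $\alpha$, which the printed statement excludes. There $1/\Gamma(-\alpha)$ is finite and nonzero, and the terms $(1-z)^{\alpha+j}$ eventually become polynomials. So with this one remark added, you have proved the theorem in its usual form, with only the nonnegative integers excluded.
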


To deal with $C_n(a) = N_{(\eno X n)}(a)$, we will construct stop
times based on the following.
\begin{definition}
  Let $x=(\eno x n)\in \{0,1\}^n$.  Let $I$ be a set of consecutive
  integers between 1 and $n$.  If $N_{(\eno x t)}(a)>N_{(\eno x t)}(b)$
  for $t\in I$, then $I$ is called a period in $x$ where $a$ dominates
  $b$, or simply an $a$-period.  On the other hand, if
  $N_{(\eno x t)}(a) = N_{(\eno x t)}(b)$ for $t\in I$, then $I$ is
  called a neutral period (in $x$ for $a$ and $b$).  It is clear that
  \begin{align} \label{e:dominant}
    N_x(a)>N_x(b)\Iff \text{$n\in$ an $a$-period},\
    N_x(a)=N_x(b)\Iff \text{$n\in$ a neutral period}.
  \end{align}
\end{definition}

\section{Generating function}\label{s:gf-step1}
In view of \Cref{t:darboux}, to prove the asymptotics of
\[
  \psi^v_n(a,b) = \prob\{C_n(a)>C_n(b)\gv (X)^L_L=v\}-1/2
\]
asserted in \Cref{t:darboux-0}, in this and next sections, we will
compute 
\begin{align} \label{e:diff-pgf}
  \Psi^v_{a,b}(z):=\sumoi n \psi^v_n(a,b) z^n.
\end{align}
The calculation in this section ends up with \eqref{e:main-gf}.  In
next section, the explicit form of the main component in the
expression in \eqref{e:main-gf} is obtained.  The radius of
convergence of the power series \eqref{e:diff-pgf} at least 1, as all
$\psi^v_n(a,b)$ are in $[-1/2,1/2]$.  In \cref{s:singular}, the
singular point(s) of $\Psi^v_{a,b}$ on the boundary of the disc of
convergence will be analyzed to yield the asymptotics in
\Cref{t:darboux-0}.

\begin{definition}\label{def:G}
  For $c,e=\pm1$, denote by $G_{c,e}$ the law of $|\gamma|$ with
  $\gamma \sim \prob(\, \cdot\gv \Pi^{\kappa(c)}_{\kappa(e),
    \kappa(-e)})$, where
  \[
    \kappa(i)= \begin{cases}
      a & \text{if~} i = 1,
      \\
      b & \text{if~} i = -1.
    \end{cases}
  \]
  Denote $g_{c,e}(z)=\mean_{\zeta\sim G_{c,e}}(z^\zeta)$, the
  generating function of $G_{c,e}$.  On the other hand, for $v\in 
  \{0,1\}^L\setminus\{a,b\}$, denote $\pi_{v,e} =
  \prob(\Pi^v_{\kappa(e), \kappa(-e)})$, and if $\pi_{v,e}>0$, denote by
  $G_{v,e}$ the law of $|\gamma|$ with $\gamma\sim \prob(\,\cdot \gv
  \Pi^v_{\kappa(e), \kappa(-e)})$ and  $g_{v,e}$ its generating
  function, while if $\pi_{v,e}=0$, define $G_{v,e}\equiv0$ and
  $g_{v,e}\equiv0$.  
\end{definition}

Note that if $v\not\in\{a,b\}$, then $\pi^v_c = p^v_c$ in
\eqref{e:Lambda-deriv-diff}.  On the other hand, if $v =
\kappa(c)\in\{a,b\}$, then $p^v_c=1=1-p^v_{-c}$ while in general
$\pi^v_c$ is less than 1.

From the strong Markov property of $((X)^L_{L+t-1}, t\ge1)$, for
$k>1$, conditional on $(Y_{k-1}, Y_k)$, $T_k - T_{k-1} \sim
G_{Y_{k-1}, Y_k}$ and is independent of $\{Y_i, T_j - T_{j-1}, i\ne
k-1, k, j\ne k\}$.  Likewise, if $v\in\{0,1\}^L \setminus\{a,b\}$,
then conditional on $(X)^L_L=v$ and $Y_1$, $T_1$ is independent of
$\{Y_i, T_i - T_{i-1}, i>1\}$ and $T_1 -L\sim G_{v,Y_1}$.  However, if
$v=\kappa(e)\in \{a,b\}$, then $\prob\{Y_1=e,T_1=L\gv
(X)^L_L=v\}=1$.

Since $((X)^L_{L+t-1}, t\ge1)$ is an irreducible Markov chain, there
are $C>0$ and $\varrho>1$ such that for $n\ge1$ and $v\in \{0,1\}^L$,
\begin{align} \label{e:spec-r}
  \prob\{T\ge n \gv (X)^L_L = v\} \le C\varrho^{-n}, \text{~where~}
  T = \min\{t>L: (X)^L_t = a \text{~or~} b\}.
\end{align}
\begin{lemma} \label{l:rc-G}
  Let $\varrho>1$ be as in \eqref{e:spec-r}  For $x\in\{0,1\}^L$ and
  $c,e=\pm1$, $g_{x,e}$ and $g_{c,e}$ are analytic in $\varrho\udisc$.
\end{lemma}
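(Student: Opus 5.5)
The plan is to bound the tails of the laws $G_{x,e}$ and $G_{c,e}$ geometrically and then read off analyticity from the tail bound. Everything rests on \eqref{e:spec-r}, applied to the lengths of paths in the sets $\Pi^{\kappa(c)}_{\kappa(e),\kappa(-e)}$ and $\Pi^x_{\kappa(e),\kappa(-e)}$.

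First, I would express each law as a conditional law of a hitting time. If $\pi_{v,e}=0$, then $g_{v,e}\equiv0$ and there is nothing to prove, so assume the conditioning set has positive probability. Take $\gamma\sim\prob(\,\cdot\gv\Pi^w_{\kappa(e),\kappa(-e)})$ with $w\in\{0,1\}^L$ (this covers both $w=\kappa(c)$ and $w=x$). By the description of $\prob(\Pi^w_{a,b})$ following \Cref{p:nonempty}, $|\gamma|$ has the law of $T-L$ conditional on $(X)^L_L=w$ and $(X)^L_T=\kappa(e)$, where $T$ is as in \eqref{e:spec-r}.

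Next, the tail bound. For every $n\ge1$,
\[
  G(\{n,n+1,\ldots\}) = \frac{\prob\{T-L\ge n,\ (X)^L_T=\kappa(e)\gv (X)^L_L=w\}}{\prob\{(X)^L_T=\kappa(e)\gv (X)^L_L=w\}}
  \le \frac{C\varrho^{-n}}{\pi},
\]
where $\pi>0$ is the conditioning probability: it equals $\pi_{w,e}$ when $w\notin\{a,b\}$, and it equals $p$ or $q$ when $w\in\{a,b\}$. Both $p$ and $q$ are positive, as shown right after \Cref{p:nonempty}. The inequality uses \eqref{e:spec-r}, and the event $T-L\ge n$ is contained in $T\ge n$.

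Finally, let $P_n$ denote the point mass of $G$ at $n$. Then $P_n\le C\varrho^{-n}/\pi$, so $\sum_n P_n|z|^n<\infty$ whenever $|z|<\varrho$. The power series $g(z)=\sum_n P_n z^n$ therefore converges absolutely and locally uniformly on $\varrho\udisc$, and so defines an analytic function there. The same argument, applied to each of the finitely many pairs $(c,e)$ and $(x,e)$, proves the lemma.

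The only real obstacle is to make sure the conditioning probability is strictly positive, so that dividing by it does not spoil the geometric rate. This holds by hypothesis for $w\notin\{a,b\}$ whenever $g_{w,e}$ is nontrivial, and it follows from $p\in(0,1)$ when $w\in\{a,b\}$. A minor point is that \eqref{e:spec-r} must hold uniformly over the starting word $v$. This is as stated, and it comes from the finite irreducible chain: some fixed block of steps reaches $\{a,b\}$ with probability bounded below from every starting word.
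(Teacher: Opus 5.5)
Your proof is correct and takes essentially the same route as the paper. You identify $G_{c,e}$ and $G_{x,e}$ with conditional laws of the hitting time of $\{a,b\}$, bound the point masses by $C\varrho^{-n}$ from \eqref{e:spec-r} divided by the positive normalizing probability ($p$, $q$, or $\pi_{x,e}$), and conclude that the power series converges in $\varrho\udisc$. You also make explicit the positivity of the normalizing constant and the trivial case $\pi_{x,e}=0$, which the paper leaves implicit.
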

\begin{proof}
  Let $\zeta\sim G_{c,e}$.  For $k\ge1$, $\prob\{\zeta=k\}
  \prob(\Pi^{\kappa(c)}_{\kappa(e),\kappa(-e)})=
  \prob\{T_2 - T_1=k, (X)^L_{T_2}=\kappa(e)\gv (X)^L_{T_1} =
  \kappa(c)\}$.  Then from \eqref{e:spec-r}, $\prob\{\zeta=k\} = 
  O(\varrho^{-k})$, yielding the proof for $g_{c,e}$.  The proof for
  $g_{x,e}$ is similar.
\end{proof}

Let $S_0=0$ and $S_k = S_{k-1} + Y_k$ for $k\ge0$.  It is seen that
\[
  S_k = \cum Y k = C_{T_k}(a) - C_{T_k}(b), \quad k\ge0.
\]
Define
\[
  \tau_0 = 0, \quad \tau_j = \min\{k>\tau_{j-1}: S_k=0\}, \quad j\ge1.
\]
It can be shown that
\[
  \prob\{\Lsup_{k\toi} S_k = -\Linf_{k\toi} S_k= \infty\}=1.
\]
Since $S_k$ can only change by $\pm1$ each step, this gives
$\prob\{\tau_j<\infty \text{~for all~} j\ge 1\}=1$.  Let
\begin{gather}
  \sigma_0=1,\ \theta_j = T_{\tau_{j-1}+1},\
  \omega_j = S_{\tau_{j-1}+1},\
  \sigma_j = T_{\tau_j},  \quad j\ge1.
  \label{e:sigma-theta-omega}
\end{gather}

\begin{prop} \label{p:stop-times}
  For $j\ge1$,
  \begin{align} \label{e:theta2}
    C_{\theta_j}(a) - C_{\theta_j}(b) = \omega_j
    &= Y_{\tau_{j-1}+1}
    =
    \begin{cases}
      1 & \text{if~} (X)^L_{\theta_j}=a \\
      -1 & \text{if~} (X)^L_{\theta_j} = b
    \end{cases}
    \\\label{e:theta2b}
    &=-Y_{\tau_j}
    =
    \begin{cases}
      -1 & \text{if~} (X)^L_{\sigma_j}=a \\
      1 & \text{if~} (X)^L_{\sigma_j} = b.
    \end{cases}
  \end{align}
  Furthermore,
  \begin{gather} \label{e:theta-sigma}
    \theta_j =
    \min\{t>\sigma_{j-1}: C_t(a) \ne C_t(b)\},
    \quad
    \sigma_j = \min\{t>\theta_j: C_t(a) = C_t(b)\},
  \end{gather}
  i.e., $\{\sigma_{j-1}, \ldots, \theta_j-1\}$ is a neutral period and
  $\{\theta_j, \ldots, \sigma_j-1\}$ is a $\kappa(\omega_j)$-period. 
\end{prop}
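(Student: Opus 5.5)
The plan is to reduce everything to the walk $S_k=\cum Y k$ and to one elementary observation. The counts $C_t(a)$ and $C_t(b)$ change only at the times $T_k$, where exactly one of them increases by one. Hence, with $T_0=0$,
\begin{align*}
  C_t(a)-C_t(b) = S_k \quad\text{for } T_k\le t<T_{k+1},\ k\ge0,
\end{align*}
and in particular $C_t(a)-C_t(b)=0$ for $1\le t<T_1$. I will prove this first, by induction on $k$, using the definitions \eqref{e:Tk} and \eqref{e:Y-X}.

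Next come the identities \eqref{e:theta2} and \eqref{e:theta2b}. By definition $S_{\tau_{j-1}}=0$ (for $j=1$, $S_0=0$). Therefore
\[
  \omega_j=S_{\tau_{j-1}+1}=Y_{\tau_{j-1}+1},
\]
and the displayed identity at $t=\theta_j=T_{\tau_{j-1}+1}$ gives $C_{\theta_j}(a)-C_{\theta_j}(b)=\omega_j$. The case split in \eqref{e:theta2} is just \eqref{e:Y-X}. For \eqref{e:theta2b}, I use that $S$ moves by $\pm1$ and $S_k\ne0$ for $\tau_{j-1}<k<\tau_j$, so $S_k$ keeps the sign of $\omega_j$ throughout this range. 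Since $S_{\tau_j}=0$, we get $\tau_j\ge\tau_{j-1}+2$. The only value of sign $\omega_j$ that is adjacent to 0 is $\omega_j$ itself, so $S_{\tau_j-1}=\omega_j$. This gives $Y_{\tau_j}=S_{\tau_j}-S_{\tau_j-1}=-\omega_j$, and \eqref{e:Y-X} translates this into the statement about $(X)^L_{\sigma_j}$. The almost-sure finiteness of all $\tau_j$, stated before the proposition, makes every quantity well defined.

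For \eqref{e:theta-sigma}, I apply the displayed identity on the blocks of times.
\begin{itemize}
  \item On $[\sigma_{j-1},\theta_j)=[T_{\tau_{j-1}},T_{\tau_{j-1}+1})$ the difference equals $S_{\tau_{j-1}}=0$. For $j=1$ the interval is $[1,T_1)$, where the difference is also 0, which is why $\sigma_0=1$ rather than $T_0$.
  \item At $\theta_j$ the difference becomes $\omega_j\ne0$, so $\theta_j$ is the first time after $\sigma_{j-1}$ with unequal counts.
  \item On $[\theta_j,\sigma_j)=[T_{\tau_{j-1}+1},T_{\tau_j})$ the difference equals some $S_k$ with $\tau_{j-1}<k<\tau_j$. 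Each such $S_k$ is nonzero with the sign of $\omega_j$.
  \item At $\sigma_j=T_{\tau_j}$ the difference is $S_{\tau_j}=0$.
\end{itemize}
This yields both minimum formulas, the neutral period and the $\kappa(\omega_j)$-period.

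The argument is elementary throughout. The only points needing care are the boundary case $j=1$, caused by the convention $\sigma_0=1$ together with $C_t\equiv0$ for $t<T_1$, and the sign-preservation step, which forces $\tau_j\ge\tau_{j-1}+2$ and $S_{\tau_j-1}=\omega_j$.
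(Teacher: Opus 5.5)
Your proposal is correct and follows the paper's proof essentially step for step. The paper also writes $C_t(a)-C_t(b)=S_{k(t)}$ with $k(t)=\max\{k:T_k\le t\}$, noting $k(1)=0$ because $L>1$; this is exactly your $j=1$ boundary observation. It then reads off \eqref{e:theta2} from $S_{\tau_{j-1}}=0$, gets \eqref{e:theta-sigma} by locating $k(t)$ on the blocks $[\sigma_{j-1},\theta_j)$ and $[\theta_j,\sigma_j)$, and derives \eqref{e:theta2b} from the same sign-preservation argument yielding $S_{\tau_j-1}=\omega_j$.
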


\begin{proof}
  From \eqref{e:Y-X}, for $t\ge1$, letting $k(t) = \max\{k: T_k\le
  t\}$, 
  \begin{align} \label{e:Ct}
    C_t(a) - C_t(b) = \sumoi k Y_k\cf{T_k\le t} = S_{k(t)}.
  \end{align}
  By $|a|=|b|=L>1$, $k(1)=0$.  From \eqref{e:Ct}, for $j\ge1$,
  $C_{\theta_j}(a) -  C_{\theta_j}(b) = S_{\tau_{j-1}+1} =
  S_{\tau_{j-1}} + Y_{\tau_{j-1}+1} = Y_{\tau_{j-1}+1}$, giving
  \eqref{e:theta2}; meanwhile, for $j\ge0$, $C_{\sigma_j}(a)
  - C_{\sigma_j}(b) = S_{\tau_j}=0$.  These equations combined with
  \eqref{e:sigma-theta-omega} imply that $\sigma_{j-1} < \theta_j<
  \sigma_j$ for all $j\ge1$.  Given $j$, for $\sigma_{j-1}\le t <
  \theta_j$, by $k(t) = \tau_{j-1}$, $C_t(a) - C_t(b) =0$, so the
  first equation in \eqref{e:theta-sigma} holds.  Next, for
  $\theta_j\le t < \sigma_j$, by $\tau_{j-1}+1\le k(t) < \tau_j$,
  $C_t(a) - C_t(b) = S_{k(t)}\ne0$, so the second equation in
  \eqref{e:theta-sigma} holds.  Finally, since $S_k$ can only change
  by $\pm1$ each step, by definition of $\tau_j$, $S_{\tau_j-1}$ has
  the same sign as  $\omega_j$, while by $S_{\tau_j}=0$, $S_{\tau_j-1}
  = -Y_{\tau_j}$, so $S_{\tau_j-1}$ can only be 1 or $-1$.  It follows
  that $S_{\tau_j-1} = \omega_j$.  Then $Y_{\tau_j} = -\omega_j$,
  giving \eqref{e:theta2b}.
\end{proof}

From \Cref{p:stop-times}, the neutral periods and $a/b$-dominant
periods are interlacing with durations $\theta_j - \sigma_{j-1}$ and
$\sigma_j - \theta_j$, respectively.  The distributions of the
durations are described as follows.
\begin{prop} \label{p:duration}
  $\inum\omega$ is a Markov chain of $\pm1$'s such that $\omega_1=Y_1$
  and
  \begin{align} \label{e:omega}
    \prob\{\omega_{j+1}=-1\gv \omega_j=-1\} =
    \prob\{\omega_{j+1}=1\gv \omega_j=1\} = q, \quad j\ge1.
  \end{align}
  Fix $n\ge1$.  Let $E=\{(X)^L_L=v, \omega_1=s_1, \ldots,
  \omega_n=s_n\}$, where $v\in\{0,1\}^L$ and $s_j=\pm1$.  Then
  $\prob(E)>0 \Iff\prob^v\{\omega_1=s_1\} >0$, and if this holds,
  then conditional on $E$, $\theta_j - \sigma_{j-1}$, $\sigma_j
  - \theta_j$, $j=1,\ldots, n$, are independent such that
  \begin{subenum}
    \item \label{i:dur-gap0}
      the distribution of $\theta_1$ only depends on $v$ and
      $s_1$, 
  \item \label{i:dur-gap}
    $\theta_j - \sigma_{j-1}\sim  G_{-s_{j-1}, s_j}$ for $j>1$, and
  \item \label{i:dur-excursion}
    $\sigma_j -\theta_j\sim \Phi_{s_j}$ for $j\ge1$, where for
    $c=\pm1$, $\Phi_c$ is the law of $\eta = \cum[2] \eta {\tau_1}$
    conditional on $Y_1 = c$ with $\eta_i\sim G_{Y_{i-1}, Y_i}$ being
    independent conditional on $\eno Y {\tau_1}$.
  \end{subenum}
\end{prop}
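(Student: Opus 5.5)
The whole argument rests on one observation: by \eqref{e:sigma-theta-omega} the indices $\tau_{j-1}+1,\ldots,\tau_j$ form the $j$-th excursion of the $\pm1$ random walk $S_k$ away from $0$. Everything about $(\omega_j)$ and the durations should then follow from the Markov property of $(Y_k)$ together with the conditional independence of the gaps $T_k-T_{k-1}$ given the $Y$'s, which was noted after \Cref{def:G}. The plan has three steps: identify $\omega_j$ and its transitions, decompose the durations into sums of gaps, and then apply conditional independence.

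\textbf{Step 1: the chain $(\omega_j)$.} By \Cref{p:stop-times}, $\omega_j=Y_{\tau_{j-1}+1}$ and $Y_{\tau_j}=-\omega_j$. Because $(Y_k)$ is a time-homogeneous Markov chain and each $\tau_j$ is a stopping time for it, the strong Markov property at $\tau_j$ gives
\[
\prob\{\omega_{j+1}=s\gv \mathcal F_{\tau_j}\}=\prob\{Y_2=s\gv Y_1=Y_{\tau_j}\}=\prob\{Y_2=s\gv Y_1=-\omega_j\}.
\]
Hence $(\omega_j)$ is Markov. Its stay-probability is $\prob\{Y_2=s\gv Y_1=-s\}=q$, which yields \eqref{e:omega}. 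We also have $\omega_1=Y_{\tau_0+1}=Y_1$.

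The excursion $(Y_{\tau_j+1},\ldots,Y_{\tau_{j+1}})$, given $\omega_{j+1}$, is independent of $\mathcal F_{\tau_j}$. Its law is that of $(Y_1,\ldots,Y_{\tau_1})$ given $Y_1=\omega_{j+1}$. I need to check one point here: the excursion law depends only on its first step. After the first step the walk evolves by the $Y$-transitions starting from $Y_{\tau_j+1}$, and these do not depend on the past. So this holds.

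For positivity of $\prob(E)$, first note $p,q\in(0,1)$. From any state the chain can switch or stay with positive probability, so every sign pattern $s_2,\ldots,s_n$ has positive conditional probability given $\omega_1=s_1$. Therefore $\prob(E)>0$ iff $\prob^v\{\omega_1=s_1\}>0$.

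\textbf{Step 2: the durations as sums of gaps.} We have
\[
\theta_j-\sigma_{j-1}=T_{\tau_{j-1}+1}-T_{\tau_{j-1}},\qquad \sigma_j-\theta_j=\sum_{k=\tau_{j-1}+2}^{\tau_j}(T_k-T_{k-1}).
\]
Condition on the full sequence $(X)^L_L=v$ and $(Y_k)_{k\le \tau_n}$. By the strong Markov property of $((X)^L_{L+t-1})$, recorded after \Cref{def:G}, the following hold under this conditioning:
\begin{itemize}
\item the gaps $T_k-T_{k-1}$ for $k\ge2$ are independent with laws $G_{Y_{k-1},Y_k}$;
\item $T_1$ depends only on $(v,Y_1)$.
\end{itemize}
The blocks above use disjoint sets of gaps, so they are conditionally independent.

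\textbf{Step 3: reading off the laws.} For $j>1$, the gap $\theta_j-\sigma_{j-1}$ has law $G_{Y_{\tau_{j-1}},Y_{\tau_{j-1}+1}}=G_{-s_{j-1},s_j}$, using $Y_{\tau_{j-1}}=-\omega_{j-1}$. This law depends only on $E$, which proves \ref{i:dur-gap}.

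For $\sigma_j-\theta_j$, integrate out the excursion path given $E$. By Step 1, given $E$ the excursions are independent, and the $j$-th has the law of $(Y_1,\ldots,Y_{\tau_1})$ given $Y_1=s_j$. Conditioning on the $\omega$'s does not bias the excursion interiors, because $\omega_{j+1}$ depends on excursion $j$ only through its last step, which is forced to equal $-s_j$. So $\sigma_j-\theta_j\sim\Phi_{s_j}$, which is \ref{i:dur-excursion}. Part \ref{i:dur-gap0} is the $T_1$ statement from Step 2, noting that $\theta_1=T_1$.

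\textbf{Main obstacle.} The delicate point is the conditioning argument in Step 3. We must justify that conditioning on $E$ and then integrating over the random excursion lengths $\tau_j-\tau_{j-1}$ preserves the mutual independence of all $2n$ durations. I would handle this by writing the joint law explicitly as a sum over excursion paths. The sum factorizes as
\[
\prod_j q_{s_j}(\text{path}_j)\cdot\prod_k G_{Y_{k-1},Y_k}(\cdot),
\]
where $q_{s_j}(\text{path}_j)$ is the probability of the $j$-th excursion path. Each factor involves only one duration.
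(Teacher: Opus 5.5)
Your proposal is correct and follows essentially the same route as the paper. The $\omega$-chain and its stay-probability $q$ come from the strong Markov property at $\tau_j$ together with $Y_{\tau_j}=-\omega_j$, and positivity of $\prob(E)$ comes from the product formula with $p,q\in(0,1)$. The duration laws come from writing $\theta_j-\sigma_{j-1}$ and $\sigma_j-\theta_j$ as (sums of) gaps $T_k-T_{k-1}$, which are conditionally independent with laws $G_{Y_{k-1},Y_k}$ given the $Y$'s. Your explicit factorization over excursion paths just spells out the conditional-independence step that the paper settles in one line by citing the strong Markov property of $((X)^L_{L+t-1})$.
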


\begin{proof}
  By the strong Markov property of $(X)^L_{L+t-1}$, $\inum\omega$ is
  a Markov chain.  From  \Cref{p:stop-times}, for $c=\pm1$,
  $\prob\{\omega_{j+1}=c\gv \omega_j=c\}= \prob\{Y_{\tau_j+1} = c\gv
  Y_{\tau_j} = -c\}$, which is equal to $q$ by the strong Markov
  property of $Y_k$.  Since $\prob(E) = 2^{-L}\prob^v\{\omega_1=s_1\}
  \prod^{n-1}_{j=1} \prob\{\omega_{j+1} =s_{j+1} \gv \omega_j =
  s_j\}$, then from \eqref{e:omega}, $\prob(E)>0\Iff \prob^v\{\omega_1
  = s_1\}>0$.

  The conditional independence of $\theta_j - \sigma_{j-1}$, $\sigma_j
  - \theta_j$, $j=1,\ldots,n$, as well as part \ref{i:dur-gap0}
  follows from the strong Markov property of $(X)^L_{L+t-1}$.  For
  $j>1$, $\theta_j - \sigma_{j-1} = \min\{t>0: (X)^L_{\sigma_{j-1} +t}
  = a \text{~or~} b\}$.  From \Cref{p:stop-times},
  $(X)^L_{\sigma_{j-1}} = \kappa(-\omega_{j-1})$.  Then by the strong
  Markov property of $(X)^L_{L+t-1}$, the distribution of $\theta_j -
  \sigma_{j-1}$ conditional on $(X)^L_L=v$ and $\omega_i = s_i$, $1\le
  i\le n$, is the same as the one conditional on $(X)^L_{\sigma_{j-1}}
  = \kappa(-s_{j-1})$ and $(X)^L_{\theta_j} = \kappa(s_j)$.  By the
  time-homogeneity of $(X)^L_{L+t-1}$, the latter conditional
  distribution is $G_{-s_{j-1},s_j}$.  Then part \ref{i:dur-gap}
  follows.

  Part \ref{i:dur-excursion} is also a consequence of the strong
  Markov property and time-homogeneity of $(X)^L_{L+t-1}$.  For
  $j\ge1$, by \Cref{p:stop-times}, $(X)^L_{\theta_j} =
  \kappa(\omega_j)$ and $(X)^L_{\sigma_j} = \kappa(-\omega_j)$.  Then
  the distribution of $\sigma_j - \theta_j$ conditional on $(X)^L_L=v$
  and $\omega_i=s_i$, $1\le i\le n$, is the same as that
  of $\sigma_1 - \theta_1$ conditional on $(X)^L_{\theta_1} =
  \kappa(s_j)$.  Since $\theta_1 = T_1$, letting $\eta_i = T_i -
  T_{i-1}$, $\sigma_1 - \theta_1 = \cum[2]\eta{\tau_1}$.  Let
  $c_1=s_j$ and $\eno[2] c{\tau_1}$ be a set of possible values of
  $\eno[2] Y{\tau_1}$, i.e.,$\prob\{Y_i = c_i, 1<i\le \tau_1\gv
  Y_1=c_1\}>0$.  For each $i$, $Y_i = c_i$ is equivalent to
  $(X)^L_{T_i} = \kappa(c_i)$.  Then the distribution of $\eta_i$
  conditional on $Y_i = c_i$, $1\le i\le \tau_1$ is the same as that
  of $\eta_i$ conditional on $Y_{i-1}=c_{i-1}$ and $Y_i = c_i$, which
  is $G_{c_{i-1}, c_i}$.  Then part \ref{i:dur-excursion} follows.
\end{proof}

With the above preparations, the first step of calculation of
$\Psi^v_{a,b}(z)$ can be carried out.  For ease of notation, in the
following, a function of $z$, say $f(z)$ will be written as $f$ after
it is defined.  For $c=\pm1$, denote by $\phi_c(z) = \mean(z^\eta\gv
Y_1=c)$ the generating function of $\Phi_c$ defined in
\Cref{p:duration} \ref{i:dur-excursion}.  Then $\phi_c$ is analytic in 
$\udisc$ and
\begin{align}\label{e:bridge-pgf}
  \phi_c = \mean[\mean(z^\eta\gv \eno Y{\tau_1})\gv Y_1=c]
  = \mean[g_{Y_1, Y_2}\cdots g_{Y_{\tau_1 - 1} Y_{\tau_1}}\gv
  Y_1=c].
\end{align}
From \eqref{e:dominant} and \Cref{p:stop-times},
\[
  \cf{C_n(a) > C_n(b)} = 
  \sumoi j \cf{\theta_j \le n < \sigma_j,\, \omega_j=1}, \quad n\ge1.
\]
Then for $z\in\udisc$,
\[
  \sumoi n \cf{C_n(a)>C_n(b)} z^n
  =
  \sumoi j \cf{\omega_j=1}\sum^{\sigma_j-1}_{n=\theta_j} z^n
  =
  \sumoi j \cf{\omega_j=1}\frac{z^{\theta_j} - z^{\sigma_j}}{1-z}.
\]
Denote by $\prob^v$ and $\mean^v$ the probability and expectation
conditional on $(X)^L_L=v$, respectively.  Applying $\mean^v$ to both
sides of the above display yields
\[
  \sumoi n \prob^v\{C_n(a)>C_n(b)\} z^n
  =
  \nth{1-z}\sumoi j \mean^v(z^{\theta_j} - z^{\sigma_j}, \omega_j=1).
\]
From $z^{\theta_j} - z^{\sigma_j} = z^{\theta_j}(1-z^{\sigma_j -
  \theta_j})$ and \Cref{p:duration}
\[
  \sumoi n \prob^v\{C_n(a)>C_n(b)\} z^n
  =
  \frac{1-\phi_1}{1-z}
  \sumoi j \mean^v(z^{\theta_j},\omega_j=1).
\]
Then from \eqref{e:diff-pgf}
\begin{align} \label{e:diff}
  \Psi^v_{a,b} =
  \nth{1-z}\Sbr{(1-\phi_1)
    \sumoi j \mean^v(z^{\theta_j},\omega_j=1) - \nth2
  }.
\end{align}  
For $c=\pm1$, let
\begin{align} \label{e:g*}
  g^v_c(z) = \mean^v(z^{\theta_1}, \omega_1=c).
\end{align}
For $j>1$ and $\eno s j\in\{\pm1\}$ with
$\prob^v\{\omega_1 = s_1\}>0$, from \Cref{p:duration},
\begin{align*}
  &
  \mean^v(z^{\theta_j}, \omega_1=s_1, \ldots, \omega_j=s_j)
  \\
  &=
  \mean^v(z^{\theta_{j-1} + (\sigma_{j-1}
  - \theta_{j-1}) + (\theta_j - \sigma_{j-1})} \gv \omega_1=s_1, \ldots, \omega_j=s_j)
  \prob^v\{\omega_1=s_1, \ldots, \omega_j=s_j\}
  \\
  &=
  \mean^v(z^{\theta_{j-1}} \gv \omega_1=s_1, \ldots,
  \omega_{j-1}=s_{j-1})
  \mean(z^{\sigma_{j-1}-\theta_{j-1}} \gv \omega_{j-1}=s_{j-1})
  \\
  &\hspace{1cm}
  \times \mean(z^{\theta_j - \sigma_{j-1}}\gv \omega_{j-1} = s_{j-1},
  \omega_j = s_j)
  \prob^v\{\omega_1=s_1, \ldots, \omega_{j-1}=s_{j-1}\}
  \\
  &\hspace{1cm}\qquad
  \times\prob(\omega_j = s_j \gv \omega_{j-1}=s_{j-1})
  \\
  &=
  \mean^v(z^{\theta_{j-1}}, \omega_1=s_1, \ldots, \omega_j=s_{j-1})
  \phi_{s_{j-1}} g_{-s_{j-1}, s_j} 
  \prob\{\omega_j = s_j \gv \omega_{j-1}=s_{j-1}\}.
\end{align*}
Then by induction,
\[
  \mean^v(z^{\theta_j}, \omega_1=s_1, \ldots, \omega_j=s_j)
  =
  g^v{s_1}
  \prod^{j-1}_{k=1}[\phi_{s_k} g_{-s_k, s_{k+1}} \prob\{\omega_{k+1} =
  s_{k+1} \gv \omega_k = s_k\}.
\]
It is easy to see that the equation holds as well when
$\prob^v\{\omega_1 = s_1\} = 0$.  Thus
\begin{align*}
  \mean^v(z^{\theta_j}, \omega_j=1)
  =
  \sum_{\eno s j=\pm1}
  g^v_{s_1} \prod^{j-1}_{k=1}
  [\phi_{s_k} g_{-s_k, s_{k+1}}
  \prob\{\omega_{k+1}=s_{k+1}\gv \omega_k = s_k\}] \cf{s_j=1}.
\end{align*}
Let
\begin{gather*}
  \nu_0(z) =
  \begin{pmatrix}
    g^v_{-1} \\
    g^v_1
  \end{pmatrix},
  \quad
  P(z)
  =
  \begin{pmatrix}
    q \phi_{-1} g_{1, -1} & p\phi_{-1} g_{1,1} \\
    p \phi_1 g_{-1,-1} & q \phi_1 g_{-1,1}
  \end{pmatrix}, \quad
  \nu = \begin{pmatrix}
    0 \\ 1 - \phi_1 \end{pmatrix}
\end{gather*}
Then $(1-\phi_1)\mean^v(z^{\theta_j}, \omega_j=1) = \nu'_0 P^{j-1} \nu$. 
For $z\in \udisc$, the spectral radius of $P(z)$ is strictly less than
1.  Then from \eqref{e:diff},
\[
  \Psi^v_{a,b}
  =
  \nth{1-z}
  \Sbr{
    \nu'_0\sumoi j P^{j-1} \nu - \nth 2
  }
  = \frac{\nu'_0(I - P)^{-1} \nu-1/2}{1-z}.
\]
Then from
\[
  (I - P)^{-1}=\nth{\det(I - P)}
  \begin{pmatrix}
    1-q \phi_1 g_{-1,1}& p\phi_{-1} g_{1,1} \\
    p \phi_1 g_{-1,-1} & 1-q \phi_{-1} g_{1, -1}
  \end{pmatrix},
\]
it follows that
\begin{align} \label{e:main-gf}
  \Psi^v_{a,b} = \nth{1-z}\Sbr{\frac{\Lambda(z)}{\det(I-P)} - \nth2}
  \quad\text{in~} \udisc,
\end{align}
where
\begin{align} \label{e:Lambda}
  \Lambda(z) = [g^v_{-1} p \phi_{-1} g_{1,1} + g^v_1(1-
  q\phi_{-1} g_{1,-1})](1-\phi_1).
\end{align}
By calculation,
\begin{align} \label{e:det}
  \det(I - P)
  =
  1-q(\phi_{-1} g_{1,-1} + \phi_1 g_{-1,1})
  + \phi_{-1} \phi_1 (q^2 g_{1,-1} g_{-1,1} - p^2 g_{1,1} g_{-1,-1}).
\end{align}
To get $\Psi^v_{a,b}$ explicitly, one has to compute $g_{c,e}$,
$c,e=\pm1$, and $\phi_{\pm1}$.  For $g_{c,e}$, this can be done by a
standard method; see \cref{s:first-step}.  For $\phi_{\pm1}$, this is
done in next section.

\section{Explicit form of $\phi_{\pm1}$} \label{s:gf-step2}
This section calculates $\phi_{\pm1}$.  The explicit forms of the
functions are given in \eqref{e:F+} and \eqref{e:F-}, respectively.
To start, note that $|g_{c,e}|<1$ in $\udisc$.  Consider $\phi_1$.  As
$\tau_1$ is an even number,
\[
  \phi_1 = \sumzi n F_n, \quad\text{where~}
  F_n
  = \mean(g_{Y_1, Y_2} \cdots g_{Y_{2n+1}, Y_{2n+2}}, \tau_1 =
      2n+2\gv Y_1 = 1).
\]

First,
\[
  F_0 =\mean(g_{Y_1, Y_2}, \tau_1 = 2\gv Y_1=1)
  = \mean(g_{Y_1,Y_2}, Y_2=-1\gv Y_1 = 1)
  = q g_{1,-1}.
\]
Recall $S_n = \sum^n_{j=1} Y_j$.  For $n\ge1$, define
\[
  \Cal B_n =\{Y_1 = 1, S_j\ge 1 \text{~for~} 1\le j
  \le 2n, S_{2n+1}=1\}.
\]
Since $\{Y_1=1, \tau_1 = 2n+2\}=\Cal B_n\cap \{Y_{2n+2}=-1\}$ and
$\Cal B_n$ implies $Y_{2n+1}=-1$, then
\begin{align*}
  F_n
  &=
  \mean(g_{Y_1, Y_2} \cdots g_{Y_{2n+1}, Y_{2n+2}}, \Cal B_n,
  Y_{2n+2}=-1\gv Y_1 = 1\}
  \\
  &=
  \mean(g_{Y_1, Y_2} \cdots g_{Y_{2n+1}, Y_{2n+2}}, Y_{2n+2}=-1\gv
  \Cal B_n, Y_1 = 1\}\prob\{\Cal B_n\gv Y_1=1\}
  \\
  &=
  \mean(g_{Y_{2n+1}, Y_{2n+2}}, Y_{2n+2}=-1\gv Y_{2n+1}=-1)
  \\
  &\hspace{4cm}
  \times \mean(g_{Y_1, Y_2} \cdots g_{Y_{2n}, Y_{2n+1}}\gv
  \Cal B_n, Y_1 = 1\}\prob\{\Cal B_n\gv Y_1=1\}
  \\
  &=
  p g_{-1,-1} \mean(g_{Y_1, Y_2}\cdots g_{Y_{2n},  Y_{2n+1}}, \Cal
  B_n\gv Y_1=1).
\end{align*}
Let $Y_*\in \{\pm1\}$ such that $Y_*, \inum Y$ form a time-homogeneous
Markov chain.  Define
\begin{align*}
  \Cal H_n
  &= \{S_j\ge0 \text{~for~} 1\le j<2n, S_{2n}=0\}\\
  &= \{Y_1=1, S_j\ge0 \text{~for~} 1<j<2n, S_{2n}=0\}
  \quad\text{and}\\
  h_n
  &=
  \mean(g_{Y_*, Y_1} g_{Y_1, Y_2} \cdots g_{Y_{2n-1}, Y_{2n}},
  \Cal H_n\gv Y_*=1).
\end{align*}
Since $\eta:= \max_{c,e} |g_{c,e}|^2<1$, $|h_n|\le\eta^n$ and so
\begin{align} \label{e:Hdef}
  H := \sumoi n h_n \quad\text{converges absolutely.}
\end{align}

By $\Cal B_n = \{Y_1 = 1, S_{1+j} - Y_1\ge0 \text{~for~} 1\le j <
2n, S_{2n+1} - Y_1=0\}$ and the time-homogeneity of $Y_*, \inum Y$,
$\mean(g_{Y_1, Y_2}\cdots g_{Y_{2n}, Y_{2n+1}}, \Cal B_n\gv Y_1=1) =
h_n$, giving
\begin{align} \label{e:FH}
  \phi_1 = q g_{1,-1} + p g_{-1,-1} H.
\end{align}

Thus it is necessary to compute $H$.  To this end, define
\begin{align*}
  \Cal K_n
  &= \{S_j>0 \text{~for~} 1\le j<2n, S_{2n}=0\} \quad\text{and}
  \\
  k^\pm_n
  &=\mean(g_{Y_*, Y_1} g_{Y_1, Y_2} \cdots g_{Y_{2n-1},
    Y_{2n}}, \Cal K_n \gv Y_* = \pm1).
\end{align*}
Since $\inum{\Cal K}$ are disjoint and $|k^\pm_n| < \prob(\Cal K_n\gv 
Y_*=\pm1)$, then
\[
  K^\pm := \sumoi n k^\pm_n \text{~converges absolutely and~}
  |K^\pm|<1. 
\]

For $1\le \seqop m<s=n$, $s\ge1$, define
\[
  \Cal R_{\eno m s} = \{S_j>0 \text{~for~} j\in [1, 2m_s]\setminus
  \{\eno{2 m} s\}, S_{2m_1} = \ldots = S_{2m_s}=0\}
\]
and
\[
  r_{\eno ms}
  =
  \mean(g_{Y_*, Y_1} g_{Y_1, Y_2} \cdots g_{Y_{2m_s-1}, Y_{2m_s}}, 
  \Cal R_{\eno m s}\gv Y_* = 1).
\]
Given $n\ge1$, the events $\Cal R_{\eno ms}$ with $m_s=n$ form a
partition of $\Cal H_n$.  In addition, $\Cal R_n =\Cal K_n$.   Then
$r_n = k^+_n$ and
\[
  h_n= k^+_n + \sum_{1\le \seqop m<s=n, s\ge2} r_{\eno m s}.
\]
Take sum over $n\ge1$,
\begin{align} \label{e:HK0}
  H = K^+ + \sum_{1\le\seqop m<s, s\ge2} r_{\eno m s}.
\end{align}

For each sequence $1\le \seqop m<s$, $s\ge2$, put
\[
  \Cal R_* = \{S_{2m_{s-1} + j} - S_{2m_{s-1}}>0 \text{~for~} 1\le j <
  2 m_s - 2 m_{s-1}, S_{2m_s} - S_{2m_{s-1}}=0\}.
\]
Then $\Cal R_{\eno m s} = \Cal R_{\eno m {s-1}} \cap \Cal R_*$, so
\begin{align*}
  r_{\eno ms}
  &=
  \mean(g_{Y_*, Y_1} g_{Y_1, Y_2} \cdots g_{Y_{2m_s-1}, Y_{2m_s}}, 
  \Cal R_* \gv \Cal R_{\eno m {s-1}}, Y_* = 1)
  \\
  &\quad\times \prob\{\Cal R_{\eno m {s-1}} \gv Y_*=1\}.
\end{align*}
Note that $\Cal R_*\in \Cal F(\eno[2m_{s-1}+1] Y{2m_s})$ and $\Cal
R_{\eno m {s-1}}\in \Cal F(\eno Y {2m_{s-1}})$.  In addition, the
second event implies $Y_{2m_{s-1}}=-1$.  Then the expectation on the
\rhs equals
\begin{multline*}
  \mean(g_{Y_*, Y_1} g_{Y_1, Y_2} \cdots g_{Y_{2m_{s-1}-1},
    Y_{2m_{s-1}}} \gv \Cal R_{\eno m {s-1}}, Y_* = 1)
  \\
  \qquad\times
  \mean(g_{2m_{s-1}, 2m_{s-1}+1}\cdots g_{2m_s-1,2m_s}, \Cal R_*\gv
  Y_{2m_{s-1}}=-1).
\end{multline*}
By time-homogeneity, the second expectation equals
\[
  \mean(g_{Y_* Y_1} g_{Y_1, Y_2} \cdots g_{Y_{2(m_s-m_{s-1})-1},
    Y_{2(m_s - m_{s-1})}}, \Cal K_{2(m_s - m_{s-1})} \gv Y_*=-1)
  =k^-_{m_s - m_{s-1}}.
\]
Then
\begin{align*}
  r_{\eno m s}
  &=
  \mean(g_{Y_*, Y_1} g_{Y_1, Y_2} \cdots g_{Y_{2m_{s-1}-1},
    Y_{2m_{s-1}}} \gv \Cal R_{\eno m {s-1}}, Y_* = 1) k^-_{m_s -
    m_{s-1}}
  \\
  &\quad\times
  \prob\{\Cal R_{\eno m{s-1}}\gv Y_*=1\}
  \\
  &=  \mean(g_{Y_*, Y_1} g_{Y_1, Y_2} \cdots g_{Y_{2m_{s-1}-1},
    Y_{2m_{s-1}}}, \Cal R_{\eno m {s-1}}\gv Y_* = 1) k^-_{m_s - m_{s-1}}
  \\
  & = r_{\eno m{s-1}} k^-_{m_s - m_{s-1}} = \cdots =
  k^+_{m_1} k^-_{m_2-m_1} \cdots k^-_{m_s - m_{s-1}}.
\end{align*}
As a result,
\begin{align}\label{e:HK}
  H=
  K^+ + \sum_{\eno j s\ge 1, s\ge2} k^+_{j_1} k^-_{j_2}\cdots
  k^-_{j_s}
  &= K^+ + K^+ \sum^\infty_{s=2} (K^-)^{s-1} = \frac{K^+}{1
    - K^-}.
\end{align}

To solve $H$, two more equations in $H$ and $K^\pm$ are needed.  To
start,
\begin{align}\nonumber
  k^\pm_1
  &= \mean(g_{Y_* Y_1} g_{Y_1 Y_2}, Y_1=1, Y_2=-1 \gv
  Y_*=\pm1)\\\label{e:k1}
  &= g_{\pm1,1} g_{1,-1} \prob\{Y_1=1, Y_2=-1\gv Y_*=\pm1\}
  = q g_{\pm1,1} g_{1,-1} \prob\{Y_1=1\gv Y_*=\pm1\}.
\end{align}
For $n>1$,  $\Cal K_n = \{Y_1=1\}\cap \Cal H^*_{n-1} \cap\{Y_{2n} =
-1\}$, where $\Cal H^*_k = \{S_{1+j} - Y_1\ge0$ for $1\le j\le 2k,
S_{2k+1} - Y_1=0\}$, $k\ge1$.  Then by Markov property and 
time-homogeneity,
\begin{align*} 
  k^\pm_n
  &=\mean(g_{Y_*, Y_1} g_{Y_1, Y_2} \cdots g_{Y_{2n-1}, Y_{2n}},
  Y_1=1, \Cal H^*_{n-1}, Y_{2n}=-1\gv Y_* = \pm1)
  \\
  &=g_{\pm1,1} \prob\{Y_1=1\gv Y_*=\pm1\}
  \mean(g_{Y_1,Y_2} \cdots g_{Y_{2n-1},Y_{2n}}, \Cal H^*_{n-1},
  Y_{2n}=-1\gv Y_1=1)\\
  &=g_{\pm1,1} \prob\{Y_1=1\gv Y_*=\pm1\}
  \mean(g_{Y_*,Y_1} \cdots g_{Y_{2n-2},Y_{2n-1}}, \Cal H_{n-1},
  Y_{2n-1}=-1\gv Y_*=1).
\end{align*}
Since $\Cal H_{n-1}$ implies $Y_{2n-2}=-1$, by Markov property, the
last expectation equals
\begin{align*}
  &\hspace{-1cm}
  \mean(g_{Y_*,Y_1} \cdots g_{Y_{2n-2},Y_{2n-1}}, 
  Y_{2n-1}=-1\gv \Cal H_{n-1}, Y_*=1) \prob\{\Cal H_{n-1}\gv Y_*=1\}
  \\
  &=\mean(g_{Y_*,Y_1} \cdots g_{Y_{2n-3},Y_{2n-2}}\gv \Cal H_{n-1},
  Y_*=1)
  \\
  &\hspace{2cm}
  \times \mean(g_{Y_{2n-2}, Y_{2n-1}}, Y_{2n-1}=-1\gv
  Y_{2n-2}=-1) \prob\{\Cal H_{n-1}\gv Y_*=1\}
  \\
  &=
  p g_{-1,-1} \mean(g_{Y_*,Y_1} \cdots g_{Y_{2n-3},Y_{2n-2}}, \Cal
  H_{n-1} \gv Y_*=1) = p g_{-1,-1} h_{n-1}.
\end{align*}
Then
\begin{align} \label{e:kn}
  k^\pm_n = p g_{\pm1,1} g_{-1,-1} \prob\{Y_1=1 \gv Y_*=\pm1\}
  h_{n-1}.
\end{align}
Take the sum of \eqref{e:k1} and \eqref{e:kn} over $n\ge1$.  Then two
equations in $H$, $K^+$, and $K^-$ obtain, as desired,
\begin{align} \label{e:KH}
  K^\pm = \prob\{Y_1=1\gv Y_*=\pm1\}g_{\pm1,1}(q g_{1,-1} 
  + p g_{-1,-1} H).
\end{align}
Plug \eqref{e:KH} into \eqref{e:HK} to get
\[
  H = \frac{p g_{1,1}(q g_{1,-1} + p g_{-1,-1} H)}
  {1 - q g_{-1,1} (q g_{1,-1} + p g_{-1,-1} H)},
\]
or 
\begin{align} \label{e:H-q}
  p g_{-1,1} g_{-1,-1} H^2 + (p^2 g_{1,1} g_{-1,-1} + q^2 g_{1,-1}
  g_{-1,1} -1) H/q + p g_{1,-1} g_{1,1}=0.
\end{align}

To solve $H$ from \eqref{e:H-q}, first suppose $g_{-1,1}
g_{-1,-1}\ne0$.  Then \eqref{e:H-q} is a quadratic equation in $H$
with determinant
\begin{align} \label{e:Delta-Q}
  \Delta = Q^2 - 4g_{1,-1} g_{-1,1}, \ \text{where}\ \
  Q=(1-p^2 g_{1,1} g_{-1,-1} + q^2 g_{1,-1} g_{-1,1})/q.
\end{align}
We need to take the square root of $\Delta$.  The next lemma makes
sure that this will not result in singularity in $\udisc$.
\begin{lemma} \label{l:Delta-zero}
  $\Delta(1)=0$.  On the other hand, there is $c>1$, such  that
  $\Delta$ is analytic in $c\udisc$ and $\Delta\ne0$ in
  $c\udisc\setminus\{1\}$.
\end{lemma}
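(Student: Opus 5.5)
The plan is to check the value at $z=1$ directly, and then to prove non-vanishing on the closed unit disc minus $\{1\}$ through a modulus inequality. Analyticity of $\Delta$ and compactness then push this out to a slightly larger disc.

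\emph{Value at $1$ and analyticity.} Each $G_{c,e}$ is a probability law on the positive integers, so $g_{c,e}(1)=1$. Hence
$$Q(1)=\frac{1-p^2+q^2}{q}=\frac{1-(p-q)}{q}=\frac{2q}{q}=2,$$
and $\Delta(1)=4-4=0$. By \Cref{l:rc-G}, every $g_{c,e}$ is analytic in $\varrho\udisc$ with $\varrho>1$. Since $\Delta$ is a polynomial in the $g_{c,e}$, it is analytic in $\varrho\udisc$. We will take $c\in(1,\varrho]$ and shrink it at the end.

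\emph{No zeros in $\bar\udisc\setminus\{1\}$.} Abbreviate $A=g_{1,1}$, $B=g_{-1,-1}$, $C=g_{1,-1}$, $D=g_{-1,1}$. Suppose $\Delta(z)=0$ and let $w$ be a square root of $CD$. Then $qQ=\pm 2qw$. Substituting the definition of $Q$ gives $1+q^2w^2\mp 2qw=p^2AB$, that is,
$$(1\mp qw)^2=p^2AB.$$
For $|z|\le1$ we have $|g_{c,e}(z)|\le1$, so $|1\mp qw|\le p$. On the other hand $|1\mp qw|\ge 1-q|w|\ge 1-q=p$. So every inequality in this chain is an equality. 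This forces $|w|=1$, $\mp w=-1$ (so $CD=1$), and then $AB=1$.

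Inside $\udisc$ we have $|g_{c,e}|<1$, so the equality case is impossible there. On $|z|=1$, the condition $|g_{c,e}(z)|=1$ forces $z^n$ to be constant, say $u_{c,e}$, on $\mathrm{supp}\,G_{c,e}$, and we also have $u_{1,1}u_{-1,-1}=u_{1,-1}u_{-1,1}=1$.

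\emph{Aperiodicity, the main obstacle.} It remains to show that these conditions force $z=1$. Any word $x$ such that $ax$ ends in $a$ decomposes at the successive occurrences of $a$ or $b$ into segments of types $(c_{i-1},c_i)$. This gives
$$z^{|x|}=\prod_i u_{c_{i-1},c_i}.$$
In such a loop from $a$ back to $a$, the numbers of $(1,-1)$ and $(-1,1)$ steps are equal. Using the two product relations, the right-hand side therefore reduces to $u_{1,1}^{\,n_{11}-n_{-1,-1}}$, where $n_{11}$ and $n_{-1,-1}$ count the steps of those two types.

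I would compare two families of loops with the same type counts but lengths differing by $1$. For large $N$, build them from words $x$ with $|x|=N$ and $|x|=N+1$ of the shape $0^m\cdots$ or $1^m\cdots$, with the padding chosen to avoid creating extra occurrences of $a$ and $b$. This needs the same kind of case analysis as \Cref{p:nonempty}, and it is where the real difficulty lies. A fallback is to use directly that the de Bruijn chain $((X)^L_{L+t-1})$ is aperiodic. Its return times to $a$ have gcd $1$, and I would try to show that the phases $u_{c,e}$ can only be consistent on all of these returns when $z=1$.

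\emph{Extending to $c\udisc$.} $\Delta$ is analytic and not identically zero, because it is nonzero inside $\udisc$. Its zero at $1$ is therefore isolated: there is $\delta>0$ such that $1$ is the only zero in $\{|z-1|<\delta\}$. The set $\{|z|=1,\ |z-1|\ge\delta\}$ is compact and carries no zeros, so by continuity some neighbourhood of it carries none either. Together with $\Delta\ne0$ in $\udisc$, this yields $c>1$ with $\Delta\ne0$ in $c\udisc\setminus\{1\}$.
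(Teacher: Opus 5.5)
Your computation of $\Delta(1)$, the modulus argument showing that a zero $z_0\in\bar\udisc\setminus\{1\}$ forces $|z_0|=1$ and $g_{1,1}g_{-1,-1}=g_{1,-1}g_{-1,1}=1$, and the compactness extension to $c\udisc$ are all fine and agree with the paper. The gap is the step you flag as the main obstacle and then leave as a plan. It is the heart of the lemma: you must rule out $z_0\ne1$ on the unit circle with $|g_{c,e}(z_0)|=1$ for all $c,e$.

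Your fallback does not close this gap. Aperiodicity of return times to $a$ is not enough, because the phase of a return loop depends on its type counts. A direct return has phase $u_{1,1}$, while a return through $b$ with $k$ intermediate $(-1,-1)$ steps has phase $u_{1,1}^{-k}$. So gcd one of the return lengths does not make $z_0^{|x|}$ constant. Abstractly, if every $G_{c,e}$ were a point mass at $1$, the return times to $a$ would be all positive integers, yet $z_0=-1$ would satisfy every constraint you derived (and would make $\Delta(-1)=0$). What you need is information about a single $G_{c,e}$, namely that its support contains two consecutive integers.

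The missing idea, which the paper uses, removes the case analysis you were worried about. By \eqref{e:assumption}, $b\ne\bar a$, so $\{a,b\}\ne\{0^L,1^L\}$. After conjugation, which leaves every $g_{c,e}$ unchanged, you may assume $1^L\notin\{a,b\}$. By \Cref{p:nonempty}~\ref{i:nonempty1x}, taking shortest paths so that $a$ and $b$ are not visited in between, there are $c,e$ with some $\gamma_1\in\Pi^{\kappa(c)}_{1^L,\kappa(-c)}$ and some $\gamma_2\in\Pi^{1^L}_{\kappa(e),\kappa(-e)}$. Since $1^L$ has a self-loop and is neither $a$ nor $b$, $\gamma_1 1^n\gamma_2\in\Pi^{\kappa(c)}_{\kappa(e),\kappa(-e)}$ for every $n\ge0$. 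Hence the support of $G_{c,e}$ contains all integers $\ge|\gamma_1|+|\gamma_2|$. Then $|g_{c,e}(z_0)|=1$ forces $z_0^n=z_0^{n+1}$, i.e.\ $z_0=1$, a contradiction. This is exactly your \emph{two loops with equal type counts and lengths differing by one}, realized inside a single segment type. Your idea of padding with runs of $1$'s was close, but without this construction the proof is incomplete.
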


\begin{proof}
  By $g_{\pm1,\pm1}(1)=1$ and $q=1-p$, $\Delta(1) = (1-p^2+q^2)^2 -
  4q^2 = 0$.  Therefore, 1 is a root of $\Delta$. 
  Suppose $z_0\in\bar\udisc\setminus\{1\}$ is also a root of
  $\Delta$.  Then $\theta:= Q(z_0)/2$ is a square root of
  $g_{1,-1}(z_0) g_{-1,1}(z_0)$, which combined with the expression 
  of $Q$ in \eqref{e:Delta-Q} yields $2\theta= (1 - p^2 g_{1,1}(z_0)
  g_{-1,-1}(z_0) + q^2 \theta^2)/q$,
  i.e.,
  \[
    (1-q\theta)^2 = p^2 g_{1,1}(z_0) g_{-1,  -1}(z_0).
  \]
  Since $|g_{\pm1,\pm1}|<1$ in $\udisc$, it follows that $|z_0|=1$ and
  $g_{1,1}(z_0) g_{-1,-1}(z_0) = g_{-1,1}(z_0) g_{1,-1}(z_0)=1$.
  Since $z_0\ne1$, then there is an integer $d\ge2$ such that
  $z^d_0=1$, and there is an integer $d_{c,e}$ for each pair of $c$
  and $e$ such that the support of $G_{c,e}$ is a subset of $d_{c,e} +
  d\Ints$.  By \eqref{e:assumption}, we can assume without loss of
  generality that $a\ne1^L$ and $b\ne1^L$, for otherwise we can make
  the transform $a\to\bar a$, $b\to\bar b$, which leaves $g_{c,e}$
  unchanged.  By \Cref{p:nonempty}~\ref {i:nonempty1x}, there is
  $c\in\{\pm1\}$ such that $\Pi^{\kappa(c)}_{1^L,\kappa(-c)}\ne
  \emptyset$.  Let $\gamma_1$ be a path in the set.  Likewise, there
  is $e\in\{\pm1\}$ such that $\Pi^{1^L}_{\kappa(e),\kappa(-e)}\ne
  \emptyset$.  Let $\gamma_2$ be a path in the set.  Then
  $\gamma_1\gamma_2 \in\Pi^{\kappa(c)}_{\kappa(e),\kappa(-e)}$.
  However, this implies that for any $n\ge0$, $\gamma_1 1^n\gamma_2
  \in \Pi^{\kappa(c)}_{\kappa(e),\kappa(-e)}$.  Since $e=c$ or $e=-c$,
  then either the support of $G_{c,c}$ or that of $G_{c,-c}$ contains
  all large integers, which is a contradiction.

  As a result, $\Delta\ne0$ in $\bar\udisc\setminus\{1\}$.  From
  \Cref{l:rc-G}, $\Delta$ is analytic in $\varrho\udisc$ for some 
  $\varrho>1$.  For each $z$ with $|z|=1$, whether or not it is 1,
  there is $r_z>0$ such that $B(z,r_z):=\{w\in \Coms: |w-z|<r_z\}
  \subset \varrho\udisc$ and $\Delta\ne0$ in $B(z,r_z)\setminus\{1\}$.
  There are $\eno z n$ with each $|z_i|=1$, such that the boundary of
  $\udisc$ can be contained in the union of $B(z_i, r_{z_i})$.  Then
  there is $c>1$ such that $c\Delta$ is contained in the union of
  $\udisc$ and $B(z_i, r_{z_i})$.  It is easy to see $c\Delta$ has the
  stated property.
\end{proof}

Let $\Omega = c\udisc\setminus [1,\infty)$, where $c\udisc$ is as in 
\Cref{l:Delta-zero}.  Then $\sqrt\Delta$ can be defined analytically in
$\Omega$ such that at $z=0$, its values is $Q(0) = 1/q$.  Then the two
solutions to
\eqref{e:H-q},
\[
  \nth{2 p g_{-1,1} g_{-1,-1}} \times
  \{(1-p^2 g_{1,1} g_{-1,-1} - q^2 g_{1,-1} g_{-1,1})/q \pm \sqrt\Delta\}
\]
are analytic in $\Omega\cap\{z: g_{-1,1}(z) g_{-1,-1}(z)\ne0\}$.   The
region is open and contains 0.  To see which solution is $H$, let
$z\to0$.  Note that $g_{c,e}(z)\to0$ for $c,e=\pm1$.  Then from
\eqref{e:Hdef}, $H(z)\to0$.  On the other hand, between the two 
solutions, only the one with $-\sqrt\Delta$ in its expression tends to
0.  Therefore, that solution is $H$.  Then from \eqref{e:FH},
\[
  \phi_1 
  = q g_{1,-1} + 
  \frac{(1-p^2 g_{1,1} g_{-1,-1} - q^2 g_{1,-1} g_{-1,1})/q - \sqrt\Delta}
  {2 g_{-1,1}} = \frac{Q-\sqrt\Delta}{2g_{-1,1}}.
\]
Denote
\begin{align} \label{e:V-f-h}
  V = \frac{Q - \sqrt\Delta}2, \quad
  f_{c,e} = \nth{g_{c,e}}.
\end{align}
Then
\begin{align}\label{e:F+}
  \phi_1= f_{-1,1} V.
\end{align}

It remains to consider the case where $g_{-1,1} g_{-1,-1}=0$.  This
can be divided into two cases.  First, $g_{-1,1}=0$.  Then
\eqref{e:H-q} gives
\[
  H = \frac{pq g_{1,-1} g_{1,1}}{1-p^2 g_{1,1} g_{-1,-1}}
\]
and so from \eqref{e:FH},
\[
  \phi_1 = q g_{1,-1} + p g_{-1,-1} \times \frac{pq g_{1,-1}
    g_{1,1}}{1-p^2 g_{1,1} g_{-1,-1}} = \frac{q g_{1,-1}}{1 - p^2
    g_{1,1} g_{-1,-1}}.
\]
It is seen this is the limit of \eqref{e:F+} as $g_{-1,1}\to0$.
Second, $g_{-1,1}\ne0=g_{-1,-1}$.  Then directly from \eqref{e:FH},
$\phi_1 = q g_{1,-1}$, which is the value of \eqref{e:F+} by letting
$g_{-1,-1}=0$.  As a result, \eqref{e:F+} still holds.  It follows
that $\phi_1$ is analytic in $\Omega$.

From \eqref{e:V-f-h}, $V$ is invariant when $-1$ and $1$ are switched.
Then by symmetry,
\begin{align} \label{e:F-}
  \phi_{-1} = f_{1,-1} V
\end{align}
which is analytic in $\Omega$.

\section{Analysis of singularity} \label{s:singular}
As noted in \cref{s:gf-step1}, $\Psi^v_{a,b}$ is analytic in $\udisc$.
We need the following.
\begin{prop} \label{p:Phi-analytic}
  $\Psi^v_{a,b}$ can be analytically extended to an open set containing
  $\bar\udisc\setminus\{1\}$.
\end{prop}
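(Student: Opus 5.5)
We will work from the representation \eqref{e:main-gf}, $\Psi^v_{a,b}=\frac{1}{1-z}\bigl[\Lambda/\det(I-P)-1/2\bigr]$, and show that each ingredient extends analytically to a neighbourhood of every point $z_0$ with $|z_0|=1$, $z_0\ne1$. Because $1/(1-z)$ is analytic away from $1$, it is enough to prove that $\Lambda/\det(I-P)$ extends analytically near each such $z_0$. The compact arc $\{|z|=1\}\setminus B(1,\delta)$ is covered by finitely many such neighbourhoods for any $\delta>0$. Taking their union with $\udisc$ then gives an open set containing $\bar\udisc\setminus\{1\}$, since the local extensions agree with $\Psi^v_{a,b}$ on $\udisc$ and hence with each other on overlaps.

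\textbf{Analyticity of the numerator.} By \Cref{l:rc-G}, every $g_{c,e}$ is analytic in $\varrho\udisc$. The same holds for $g^v_c$, because $\theta_1=T_1$ and $\theta_1-L\sim G_{v,c}$, or $\theta_1=L$ when $v\in\{a,b\}$. From \eqref{e:F+} and \eqref{e:F-}, $\phi_{\pm1}=f_{\mp1,\pm1}V$ with $V=(Q-\sqrt\Delta)/2$. By \Cref{l:Delta-zero}, $\sqrt\Delta$ is analytic in $\Omega=c\udisc\setminus[1,\infty)$, which contains a neighbourhood of each $z_0\ne1$ on the unit circle. The division by $g_{-1,1}$ is harmless: the text shows that $\phi_1$ agrees with an expression analytic in $\Omega$, and the apparent pole cancels. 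More directly, $Q-\sqrt\Delta=4g_{1,-1}g_{-1,1}/(Q+\sqrt\Delta)$, so $\phi_1=2g_{1,-1}/(Q+\sqrt\Delta)$. We will check that $Q+\sqrt\Delta\ne0$ near $z_0$. If it vanished, then $\Delta=Q^2$, so $g_{1,-1}g_{-1,1}=0$, and with the branch choice this would contradict $\phi_1$ being bounded on $\udisc$. Hence $\phi_{\pm1}$, and therefore $\Lambda$ in \eqref{e:Lambda}, extend analytically near $z_0$.

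\textbf{Main obstacle: $\det(I-P)\ne0$ on $|z|=1$, $z\ne1$.} Inside $\udisc$ the spectral radius of $P$ is less than $1$, and we extend this by continuity to the boundary. For $|z|=1$, $z\ne1$, we have $|P(z)|\le P(1)$ entrywise, where $P(1)=\bigl(\begin{smallmatrix} q&p\\ p&q\end{smallmatrix}\bigr)$ is stochastic, since $\phi_{\pm1}(1)=1$ by recurrence. Hence the spectral radius is at most $1$. Equality forces equality in the Wielandt/Perron--Frobenius comparison, that is, $P(z)=\omega DP(1)D^{-1}$ with $|\omega|=1$ and $D$ diagonal unimodular. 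This requires $|\phi_c(z)g_{-c,e}(z)|=1$ for all pairs with positive weight ($p,q\in(0,1)$).

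In particular $|g_{1,-1}(z)g_{-1,1}(z)|=1$ and $|g_{1,1}g_{-1,-1}|=1$, since $|\phi|\le1$. This is exactly the aperiodicity situation ruled out in the proof of \Cref{l:Delta-zero}: all $G_{c,e}$ would be supported on lattices $d_{c,e}+d\Ints$ with $d\ge2$, which contradicts the paths $\gamma_11^n\gamma_2$. Alternatively, we can argue directly from \eqref{e:det}: $\det(I-P)=0$ would give an eigenvalue $1$, hence the modulus equalities above, hence the same contradiction.

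Once $\det(I-P)$ is nonzero on the circle minus $\{1\}$, continuity gives nonvanishing in a neighbourhood, and the quotient $\Lambda/\det(I-P)$ is analytic there. This completes the extension.
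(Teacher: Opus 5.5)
Your proposal is correct and follows essentially the paper's proof. You reduce to showing $\det(I-P(z_0))\ne0$ for $|z_0|=1$, $z_0\ne1$; compare $|P(z_0)|$ entrywise with the irreducible stochastic matrix $P(1)$ via Perron--Frobenius/Wielandt to force $|g_{c,e}(z_0)|=1$; and then reuse the lattice-support contradiction from the proof of \Cref{l:Delta-zero}. Your extra check that $\phi_1=2g_{1,-1}/(Q+\sqrt\Delta)$ has no pole at zeros of $g_{-1,1}$ fills a point the paper passes over quickly, though your argument for it is vague. It is cleanest to note that $Q+\sqrt\Delta=0$ forces $g_{1,-1}g_{-1,1}=0$, hence $|V|=|Q|\ge 1+p>1$, which contradicts $|V|=|g_{-1,1}\phi_1|\le1$ on $\bar\udisc$.
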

\begin{proof}
  As stated at the end of \cref{s:gf-step2}, $\phi_{\pm1}$ can be
  analytically defined in an open set $\Omega$ containing
  $\bar\udisc\setminus\{1\}$.  Then from \eqref{e:Lambda} and
  \eqref{e:det}, $\Lambda$ and $\det(I-P)$ can be analytically
  extended $\Omega$.  Therefore, by \eqref{e:main-gf}, it suffices to
  show that $\det(I - P(z))\ne0$ for every $z\ne1$ with $|z|=1$.
  Assume there is $z_0\ne1$ with $|z_0|=1$ such that $\det(I -
  P(z_0))=0$.  Then there is a vector $x\ne0$ such that $P(z_0) x =
  x$.  Let
  \[
    P^* = \begin{pmatrix} \pi_{11} & \pi_{12} \\ \pi_{21} & \pi_{22}
    \end{pmatrix} =
    \begin{pmatrix}
      q |\phi_{-1}(z_0) g_{1,-1}(z_0)| &
      p |\phi_{-1}(z_0) g_{1,1}(z_0)| \\
      p |\phi_1(z_0) g_{-1,-1}(z_0)| &
      q |\phi_1(z_0) g_{-1,1}(z_0)|
    \end{pmatrix}
  \]
  and $z$ the vector of absolute values of the coordinates of $x$.
  Then every coordinate of $P^*z$ is greater or equal to the
  corresponding coordinate of $z$.  By $|\phi_{\pm1}(z_0)|\le1$
  and $|g_{\pm1,\pm1}(z_0)|\le1$, Perron--Frobenius theorem implies
  that this holds only if $|g_{\pm1,\pm1}(z_0)|=1$.  However, from the
  proof of \Cref{l:Delta-zero}, this is impossible.
\end{proof}

From \Cref{p:Phi-analytic}, to prove \Cref{t:darboux-0}, it only
remains to check the singularity of $\Psi_{a,b}$ at 1.  Combining
\eqref{e:Lambda}, \eqref{e:det}, \eqref{e:F+}, and \eqref{e:F-},
\begin{align*}
  \Lambda
  &= [g^v_{-1} p f_{1,-1} g_{1,1} V + g^v_1(1-qV)] (1-f_{-1,1}V)
  \\
  &=g^v_{-1} p f_{1,-1} g_{1,1}(V - f_{-1,1}V^2)
  + g^v_1[1-(q+f_{-1,1})V + q f_{-1,1}V^2]
\end{align*}
and
\[
  \det(I - P)
  =
  1-2qV + (q^2 - p^2 g_{1,1} g_{-1,-1} f_{1,-1} f_{-1,1})V^2.
\]
By \eqref{e:Delta-Q}, $V^2=QV - g_{1,-1} g_{-1,1}$.  Then
\[
  \Lambda
  = g^v_{-1} p f_{1,-1} g_{1,1}[g_{1,-1} + (1-f_{-1,1}Q)V]
  + g^v_1[1-q g_{1,-1} -(q+f_{-1,1}-qf _{-1,1}Q)V]
\]
and
\begin{align*}
  \det(I - P)
  &= 1 - 2qV + (q^2 - p^2 g_{1,1} g_{-1,-1} f_{1,-1} f_{-1,1})(Q V -
  g_{1,-1} g_{-1,1})
  \\
  &=
  2 - qQ + [-2q+(q^2 - p^2 g_{1,1} g_{-1,-1} f_{1,-1} f_{-1,1})Q] V,
\end{align*}
where the last equation is due to the definition of $Q$ in
\eqref{e:Delta-Q}.  Plug in \eqref{e:V-f-h} to get
\begin{align} \label{e:Lambda-det-Delta}
  \Lambda = \Lambda_1 + \lfrac{\Lambda_2\sqrt\Delta}2,
  \quad
  \det(I - P) = D_1 + D_2\sqrt\Delta/2,
\end{align}
where
\begin{align*}
  \Lambda_1&=
  g^v_{-1} p f_{1,-1} g_{1,1}[g_{1,-1} + (1-f_{-1,1}Q)Q/2]
  \\
  &\quad
  + g^v_1[1-q g_{1,-1} -(q+f_{-1,1}-qf _{-1,1}Q)Q/2],  
  \\
  \Lambda_2
  &= -g^v_{-1} p f_{1,-1} g_{1,1}(1-f_{-1,1}Q)+ g^v_1(q+f_{-1,1}-qf
  _{-1,1}Q)
\end{align*}
and
\begin{align*}
  D_1
  &=2-[4q-(q^2 - p^2 g_{1,1} g_{-1,-1} f_{1,-1} f_{-1,1})Q] Q/2,
  \\
  D_2
  &= 2q-(q^2 - p^2 g_{1,1} g_{-1,-1} f_{1,-1} f_{-1,1})Q.
\end{align*}
Then from \eqref{e:main-gf}
\[
  \Psi^v_{a,b} =
  \nth{1-z}\Grp{\frac{\Lambda_1 + \Lambda_2\sqrt\Delta/2}
  {D_1 + D_2\sqrt\Delta/2} -\nth2}\quad\text{in~} \udisc.
\]
Multiply the numerator and denominator on the \rhs by $D_1 -
D_2\sqrt\Delta/2$ to get
\begin{align} \label{e:main-gf3}
  \Psi^v_{a,b}
  =
  A + \frac{B}{C\sqrt\Delta} \quad\text{in~} \udisc,
\end{align}
where
\[
  A = 
  \frac{(\Lambda_1 - D_1/2)D_1  - \Delta(\Lambda_2 - D_2/2)D_2/4}
  {(1-z)(D^2_1 - D^2_2\Delta/4)}, \quad
  B = \frac{\Lambda_2 D_1 - \Lambda_1 D_2}{2(1-z)}, \quad
  C = \frac{D^2_1}\Delta - \frac{D^2_2}4.
\]

From \Cref{l:rc-G}, $\Lambda_i$, $D_i$, $i=1,2$, $Q$, and $\Delta$ are 
analytic in $\varrho\udisc$, where $\varrho>1$.  From 
\[
  g^v_c(1) = p^v_c, \quad
  g_{c,e}(1) = 1, \quad c,e=\pm1,
\]
it follows that $\Lambda_1(1) = D_1(1)=0$ and $\Lambda_2(1)= D_2(1)/2=
p$.  As a result, $A$, $B$, and $C$ can be analytically extended
in a neighborhood around 1 with
\[
  B(1)
  =
  -(\Lambda_2 D_1 - \Lambda_1 D_2)'(1)/2
  = p [\Lambda'_1(1)-D'_1(1)/2]
\]
and
\[
  C(1) = \lim_{z\to1} [D^2_1(z)/\Delta(z) - D^2_2(z)/4] = -p^2\ne0.
\]

To evaluate $B(1)$, notice that $g'_{c,e}(1) = m_{c,e}$, where
$m_{c,e}$ is defined in \eqref{e:mean-gap}.  Since $p m_{c,c} =
\mu_{c,c}$ and $q m_{c,-e} = \mu_{c,-c}$, then from \eqref{e:Delta-Q}
\[
  Q(1) = 2, \quad
  Q'(1) = - p (\mu_{1,1} + \mu_{-1,-1})/q + (\mu_{1,-1} + \mu_{-1,1}).
\]
Since the two bracketed terms in the expression of $\Lambda_1$ are
equal to 0 at $z=1$, then
\begin{align*}
  \Lambda'_1(1)
  &= p^v_{-1} p(g_{1,-1} + Q/2 -f_{-1,1}Q^2/2)'(1)
  \\
  &\quad
  +p^v_1(1-q g_{1,-1} -qQ/2-f_{-1,1}Q/2+qf _{-1,1}Q^2/2)'(1)
  \\
  &= p^v_{-1} p[m_{1,-1} + 2m_{-1,1} - 3Q'(1)/2]
  \\
  &\quad
  + p^v_1[-qm_{1,-1} + (1-2q) m_{-1,1} +(3q/2- 1/2)Q'(1)]
  \\
  &=
  \Grp{p^v_{-1} - \frac{3q}2+\nth 2}
  \frac pq \mu_* +
  \frac{p(\mu_{-1,1} - \mu_{1,-1})}{2q},
\end{align*}
where $\mu_*=\mu_{1,1} + \mu_{-1,-1} + \mu_{1,-1} + \mu_{-1,1}$.  On
the other hand,
\begin{align*}
  D'_1(1)
  &=
  (-2qQ + q^2 Q^2/2 - p^2 g_{1,1} g_{-1,-1} f_{1,-1} f_{-1,1}Q^2/2)'(1)
  \\
  &=
  -2p^2(m_{1,1} + m_{-1,-1} - m_{1,-1} - m_{-1,1}) -2pQ'(1)
  =
  -2(q-p)\frac p q \mu_*.
\end{align*}
Then
\begin{align*}
  B(1)
  &=
  -\frac{p^2}q\Sbr{(\lfrac p2 - p^v_{-1})\mu_* +\frac{\mu_{1,-1} -
      \mu_{-1,1}}2}.
\end{align*}
From \eqref{e:sum-mean}, $\mu_*=2^L$.  Then $B(1)=-p^2\beta^v_{a,b}/q$
according to \eqref{e:beta}.  As a result, $B/C$ can be analytically
extended in a neighborhood around 1 with value at $z=1$ equal to
$\beta^v_{a,b}/q$.  Furthermore, from \eqref{e:Delta-Q}, $\Delta(1) =
0$ and $\Delta'(1) = -4(p/q)\mu_*= 2^{L+2}p/q$.  Then $\Delta/(1-z)$
has an  analytic extension around $z=1$ with the value at $z=1$ equal to
$-\Lambda'(1)>0$.  If $B(1)\ne0$, then by Darboux \Cref{t:darboux},
the asymptotics in \eqref{e:psi-n} holds.  If $B(1)=0$ but
$B(z)\not\equiv0$, then $B\Sp k(1)\ne0$ for some $k\ge1$.  Then again
by Darbous \Cref{t:darboux}, \eqref{e:psi-n} holds.  If $B(z)\equiv0$,
then $\Psi^v_{a,b}$ is analytic in an open set containing
$\bar\udisc$, so by Darbous \Cref{t:darboux}, \eqref{e:psi-n} holds.


\appendix
\section{First-step analysis} \label{s:first-step}
The appendix describes how to obtain $p$ in \eqref{e:tran-p},
$\mu_{c,e}$ in \eqref{e:mean-gap}, $p_c$ in \eqref
{e:Lambda-deriv-diff}, and $g_{c,e}$ in \Cref{def:G}, using a 
method  sometimes called the
first-step analysis (cf.\ \cite{lalley:01:cm, chi:26:spl}).  Define
\[
  T = \min\{n\ge1: (X)^L_{L+n} = a \text{~or~} b\}
\]
and for $x\in \{0,1\}^L$ and $y=a,b$,
\[
  \gamma_{x,y}(z) = \sumoi n \prob\{T=n, (X)^L_{L+n} = y\gv
  (X)^L_L=x\} z^n.
\]
From $\gamma_{x,y}$ all the above quantities and functions can be
derived:
\begin{gather} \label{e:gamma-z}
  p = \gamma_{a,a}(1) = \gamma_{b,b}(1), \quad
  \mu_{c,e} = \gamma'_{\kappa(c),\kappa(e)}(1), \quad c,e=\pm1,
  \\\nonumber
  p_c = 2^{-L} + \sum_{x\in \{0,1\}^L\setminus\{a,b\}}
  \gamma_{x,\kappa(c)}(1), \quad c=\pm1,
  \\\nonumber
  g_{c,c}(z) = \gamma_{\kappa(c), \kappa(c)}(z)/p, \quad
  g_{c,-c}(z) = \gamma_{\kappa(c), \kappa(-c)}(z)/(1-p), \quad c=\pm1,
\end{gather}
where $\kappa(1)=a$ and $\kappa(-1)=b$.  To get, for example
$\gamma_{x,a}(z)$, denote by $\pi_{x,y}$ the transition probabilities
of the Markov chain $(X)^L_{L+n-1}$.  Then $\gamma_{x,a}(z) =
z[\pi_{x,a} +  \sum_{y\ne a,b} \pi_{x,y} \gamma_{y,a}(z)]$.  Let
$\bar\gamma_a(z)$ be the (column) vector of $\gamma_{x,a}(z)$,
$\bar\pi_a$ the vector of $\pi_{x,a}$, and $Q_{a,b}$ the matrix of
$\pi_{x,y}\cf{y\ne a,b}$.  Then the above equation can be written as
$\bar\gamma_a(z) = z\bar\pi_a + z Q_{a,b} \bar\gamma_a(z)$, giving
\[
  \bar\gamma_a(z) = z(I - z Q_{a,b})^{-1}\bar\pi_a.
\]
By $\det(I-Q_{a,b})\ne0$ (cf.\ \cite{chi:26:spl}),
$\bar\gamma_a(1) = (I-Q_{a,b})^{-1}\bar\pi_a$ and $\bar\gamma'_a(1) =
(I-Q_{a,b})^{-1}\bar\gamma_a(1)$.

To get $Q_{a,b}$, index each $x=(\eno x L)\in \{0,1\}^L$ by 
$n_x=1+\sum^L_{i=1} 2^{i-1}x_i$, so that the matrix $(\pi_{x,y})$ can
be written as 
\[
  \small
  \begin{pmatrix}
    1/2&1/2& & &  & & \\
    & &1/2&1/2&  & & \\
    & & & &\ddots& & \\
    & & & &&1/2&1/2\\
    1/2&1/2& & &  & & \\
    & &1/2&1/2&  & & \\
    & & & &\ddots& & \\
    & & & &&1/2&1/2\\
  \end{pmatrix}.
\]
Then $Q_{a,b}$ can be obtained from the matrix by replacing all the
entries in the $n_a$- and $n_b$-th columns with 0.

\begin{proof}[Proof of \eqref{e:sum-mean}]
  Let $a,b\in\{0,1\}^L$ and $a\ne b$.  Let $T_a = \min\{n\ge1:
  (X)^L_{L+n}=a\}$ and
  \[
    f_a(z) = \sumoi n \prob\{T_a=n\gv (X)^L_L = a\} z^n
    = \sum_{x\in\Pi_a} 2^{-|x|} z^{|x|},
  \]
  where $\Pi_a = \{x\in\{0,1\}^\#: (ax)^L_{L+t}\ne a \text{~for~}
  0<t<|x|, (ax)^L_{L+|x|}=a\}$, i.e., the set of paths leading to the
  first return to $a$ given the initial word being $a$.  Depending
  on whether such a path visits $b$, it is either a path in
  $\Pi^a_{a,b}$, or a path of the form $z_1 \cset y k z_2$ with
  $k\ge0$, where $z_1\in \Pi^a_{b,a}$, $y_i\in\Pi^b_{b,a}$, and
  $z_2\in\Pi^b_{a,b}$.  As a result,
  \[
    f_a = \gamma_{a,a} + \sumzi k \gamma_{a,b} (\gamma_{b,b})^k
    \gamma_{b,a}
    = \gamma_{a,a} + \frac{\gamma_{a,b}\gamma_{b,a}}{1-\gamma_{b,b}}.
  \]
  Take derivatives on both sides at $z=1$ and apply \eqref{e:gamma-z}, 
  noting that from \Cref{p:nonempty}, $1-\gamma_{b,b}(1) =
  \gamma_{b,a}(1)>0$.  Then $f'_a(1) = \sum_{c,e=\pm1} \mu_{c,e}$.  On
  the other hand, $f'_a(1)$ is the mean waiting time until $a$ is
  revisited by $(X)^L_{L+t-1}$ given that $(X)^L_L=a$, and it is the
  reciprocal of the long-term frequency of $a$, which is $2^{-L}$.
  Then $f'_a(1)=2^L$, giving \eqref{e:sum-mean}. 
\end{proof}

\end{document}